\documentclass[reqno]{amsart}
\usepackage{amsmath}
\usepackage{amssymb, hyperref}

\usepackage{graphicx,color}
\usepackage{slashed}
\setlength{\textwidth}{15cm} \setlength{\textheight}{22 cm}
\addtolength{\oddsidemargin}{-1.5cm} \addtolength{\evensidemargin}{-1.5cm}

\renewcommand{\Im}{\operatorname{Im}}
\usepackage{mathtools}
\mathtoolsset{showonlyrefs}
\newcommand{\R}{\mathbb{R}}

\usepackage{parskip}

\usepackage{xcolor}

\begin{document}
\title[On the IBNLS equation]{Scattering for the non-radial inhomogenous biharmonic NLS equation}
	
     \author[L. CAMPOS ]
	{LUCCAS CAMPOS}  
	
	\address{LUCCAS CAMPOS  \hfill\break
	Universidade Federal de Minas Gerais, MG, Brazil.}
	\email{luccasccampos@gmail.com}

	\author[C. M. GUZM\'AN ]
	{CARLOS M. GUZM\'AN } 
	
	\address{CARLOS M. GUZM\'AN \hfill\break
		Department of Mathematics, Fluminense Federal University, BRAZIL}
	\email{carlos.guz.j@gmail.com}

\begin{abstract}
We consider the focusing inhomogeneous biharmonic nonlinear Schr\"odinger equation in $H^2(\mathbb{R}^N)$,
\begin{equation*}
iu_t + \Delta^2 u - |x|^{-b}|u|^{\alpha}u=0,\\
\end{equation*}
when $b > 0$ and $N \geq 5$. We first obtain a small data global result in $H^2$, which, in the five-dimensional case, improves a previous result from  Pastor and the second author. In the sequel, we show the main result, scattering below the mass-energy threshold in the intercritical case, that is, $\frac{8-2b}{N} < \alpha <\frac{8-2b}{N-4}$, without assuming radiality of the initial data. The proof combines the decay of the nonlinearity with Virial-Morawetz-type estimates to avoid the radial assumption, allowing for a much simpler proof than the Kenig-Merle roadmap.

\ 

\noindent Mathematics Subject Classification. 35A01, 35QA55, 35P25.
\end{abstract}

\keywords{Inhomogeneous biharmonic nonlinear Schr\"odinger equation; Global well-posedness; Scattering}

	\maketitle  
	\numberwithin{equation}{section}
	\newtheorem{theorem}{Theorem}[section]
	\newtheorem{proposition}[theorem]{Proposition}
	\newtheorem{lemma}[theorem]{Lemma}
	\newtheorem{corollary}[theorem]{Corollary}
	\newtheorem{remark}[theorem]{Remark}
	\newtheorem{definition}[theorem]{Definition}

\section{Introduction}
\indent In this paper, we study the Cauchy problem for the focusing inhomogeneous biharmonic nonlinear Schrö\-din\-ger equation (IBNLS for short)
\begin{equation}\label{IBNLS}
\begin{cases}
iu_t + \Delta^2 u - |x|^{-b}|u|^{\alpha}u=0,\\
u(0) = u_0 \in H^2(\mathbb{R}^N), 
\end{cases}
\end{equation}
where, $N \geq 5$ and $\alpha,b>0$. The limiting case b = 0 (classical biharmonic nonlinear Schr\"odinger equation (BNLS)) was introduced by Karpman \cite{karpman1996} and Karpman-Shagalov \cite{karpman-Shagalov} in order to take into consideration the role of small fourth-order dispersion terms in the propagation of intense laser beams in a bulk medium with Kerr nonlinearity.

The IBNLS equation is invariant under the scaling, $u_\mu(t,x)=\mu^{\frac{4-b}{\alpha}}u(\mu^4 t,\mu x)$,  $\mu >0$. This means if $u$  is a solution of \eqref{IBNLS}, with initial data $u_0$, so is $u_\mu$ with initial data $u_{\mu,0}=\mu^{\frac{4-b}{\alpha}}u_0(\mu x)$.  A straightforward computation yields
$$
\|u_{0,\mu}\|_{\dot{H}^s}=\mu^{s-\frac{N}{2}+\frac{4-b}{\alpha}}\|u_0\|_{\dot{H}^s},
$$
implying that the scale-invariant Sobolev space is $\dot{H}^{s_c}(\mathbb{R}^N)$, with $s_c=\frac{N}{2}-\frac{4-b}{\alpha}$, the so called \textit{critical Sobolev index}. 

When $s_c = 0$ (or $\alpha=\frac{8-2b}{N}$), the critical space is $L^2$, which is naturally associated to the conserved
mass of solutions, defined by
\begin{equation}
M[u(t)]=\int_{\mathbb{R}^N}|u(t,x)|^2dx.
\end{equation}
On the other hand, when $s_c = 2$ (or $\alpha=\frac{8-2b}{N-4}$), the critical space is $\dot{H}^2$, which is naturally associated to the conserved energy of solutions, defined by
\begin{equation}\label{energy}
E[u(t)]=\frac{1}{2}\int_{\mathbb{R}^N}| \Delta u(t,x)|^2dx-
\frac{1}{\alpha +2} \int_{\mathbb{R}^N}|x|^{-b}|u|^{\alpha +2}dx.
\end{equation}
Here, we are interested in studying the $L^2$-supercritical and $\dot{H}^2$-subcritical case (also called intercritical), i.e., $0<s_c<2$ (or 
$\frac{8-2b}{N} < \alpha <\frac{8-2b}{N-4}$).


First, we briefly review some recent developments for the IBNLS model. It was first studied by Cho-Ozawa-Wang \cite{Cho-Ozawa}, they considered the inhomogeneous power type $|x|^{-2}|u|^{\frac{4}{N}} u$ and showed the existence of weak solutions by regularizing the nonlinearity. Recently Pastor and the second author in \cite{GUZPAS}, using the Strichartz estimates, considered the more general power-like nonlinearities of the form $|x|^{b}|u|^\alpha u$. They obtained local well-posedness in $H^2$ for $N\geq 3$, $0< b <\min\{\tfrac{N}{2},4\}$ and $\min\{\tfrac{2(1-b)}{N}, 0\}<\alpha <4^*$, where $4^*=\frac{8-2b}{N-4}$ if $N\geq 5$ or $4^*=\infty$ if $N=3,4$. Moreover, in the mass-supercritical and energy-subcritical cases, $\tfrac{8-2b}{N}<\alpha<4^*$, they showed the small data global existence for dimensions $N\geq 3$, with extra assumptions on the parameters $\alpha$ and $b$ when $N=5,6,7$, that is, they did not show global solution in the full intercritical regime of $\alpha$, and they assume an extra upper bound on $b$. More recently, for the local theory in $H^2$, the lower bound ($\alpha>\frac{2(2-b)}{N}$) was removed by Liu-Zhang \cite{Xuan-Ting} using the Besov spaces. They also established local well-posedness in $H^s$ with $0<s\leq 2$. On the other hand, for the global theory, the authors in \cite{GuzPas1} improved the results showed in \cite{GUZPAS}, by making use of the Hardy inequality, removing the extra assumptions for $N=6,7$. However for the dimension $N=5$ the authors did not cover the full range on $b$ where the local solution was obtained. More precisely, they showed global well-posedness for $\frac{8-2b}{7}<\alpha<7-2b$ and $0<b<\frac{5}{2}$ or $\frac{8-2b}{5}<\alpha <8-2b$ and $0<b<\frac{3}{2}$. The gap $\frac{3}{2}\leq b<\frac{5}{2}$ was still an open problem, which we solve here. Thus, our first goal is to improve the global result in the intercritical $5D$ to $0<b<\frac{5}{2}$.
\begin{theorem}\label{GWPH2}
Assume $N\geq 5$, $0<b<\min \left\{\frac{N}{2},4\right\}$ and $\frac{8-2b}{N}<\alpha<\frac{8-2b}{N-4}$. If $u_0 \in H^2$ satisfies $\|u_0\|_{H^2}\leq E$, for some $E>0$, then there exists $\delta_{sd}=\delta_{sd}(E)>0$ such that if $\|e^{it\Delta^2}u_0\|_{B(\dot{H}^{s_c})}<\delta_{sd}$, then there exists a unique global solution $u$ of \eqref{IBNLS} such that
\begin{equation*}\label{NGWP3}
\|u\|_{B(\dot{H}^{s_c})}\leq  2\|e^{it\Delta^2}u_0\|_{B(\dot{H}^{s_c})}\qquad \textnormal{and}\qquad \|u\|_{B\left(L^2\right)}+\|\Delta  u\|_{B\left(L^2\right)}\leq 2c\|u_0\|_{H^2}.
\end{equation*}
for some universal constant $c>0$. 
\end{theorem}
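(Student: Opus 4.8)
The plan is to construct the solution via a fixed-point argument for the Duhamel map
\begin{equation*}
\Phi(u)(t) = e^{it\Delta^2}u_0 - i\int_0^t e^{i(t-s)\Delta^2}\bigl(|x|^{-b}|u|^{\alpha}u\bigr)(s)\,ds
\end{equation*}
on the complete metric space
\begin{equation*}
X = \Bigl\{\, u:\ \|u\|_{B(\dot H^{s_c})}\le 2\|e^{it\Delta^2}u_0\|_{B(\dot H^{s_c})},\ \ \|u\|_{B(L^2)}+\|\Delta u\|_{B(L^2)}\le 2c\|u_0\|_{H^2}\,\Bigr\},
\end{equation*}
metrized by $d(u,v) = \|u-v\|_{B(\dot H^{s_c})}+\|u-v\|_{B(L^2)}$, where $c$ is the constant in the homogeneous Strichartz estimate $\|e^{it\Delta^2}u_0\|_{B(L^2)}+\|\Delta e^{it\Delta^2}u_0\|_{B(L^2)}\le c\|u_0\|_{H^2}$; completeness of $X$ in this metric follows from weak lower semicontinuity of the defining norms. (As usual, one may instead run the contraction in the $B(\dot H^{s_c})$-distance alone while propagating the $B(L^2)$ and $\dot H^2$ bounds.)

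Applying the inhomogeneous Strichartz estimates at the $\dot H^{s_c}$, $L^2$, and $\dot H^2$ levels to the Duhamel term, the whole argument reduces to nonlinear estimates of the schematic form
\begin{align*}
\bigl\| |x|^{-b}|u|^{\alpha}u \bigr\|_{B'(\dot H^{-s_c})} &\lesssim \|u\|_{B(\dot H^{s_c})}^{\theta_1}\bigl(\|u\|_{B(L^2)}+\|\Delta u\|_{B(L^2)}\bigr)^{\alpha+1-\theta_1},\\
\bigl\| |x|^{-b}|u|^{\alpha}u \bigr\|_{B'(L^2)} + \bigl\| \Delta\bigl(|x|^{-b}|u|^{\alpha}u\bigr)\bigr\|_{B'(L^2)} &\lesssim \|u\|_{B(\dot H^{s_c})}^{\theta_2}\bigl(\|u\|_{B(L^2)}+\|\Delta u\|_{B(L^2)}\bigr)^{\alpha+1-\theta_2},
\end{align*}
where $B'$ denotes the relevant dual Strichartz spaces and $\theta_1>1$, $0<\theta_2\le\alpha$ are suitable admissible powers, together with their Lipschitz analogues bounding $|x|^{-b}(|u|^{\alpha}u-|v|^{\alpha}v)$. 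The fact that the weight may prevent taking $\theta_1=\alpha+1$ or $\theta_2=\alpha$ — i.e. may force strictly more than one factor into the $\dot H^2$ norm — is precisely why the $H^2$ a priori size $\|u_0\|_{H^2}\le E$ enters and why $\delta_{sd}$ ends up depending on $E$.

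The crux is establishing these nonlinear estimates despite the singular factor $|x|^{-b}$. I would split $\mathbb{R}^N = \{|x|\le 1\}\cup\{|x|>1\}$. On the exterior region $|x|^{-b}\le 1$, so after the fractional Leibniz rule together with the $C^1$, resp.\ $C^2$, chain rule for $z\mapsto|z|^{\alpha}z$, one is reduced to Strichartz and Sobolev inequalities of the type used in the homogeneous BNLS theory; the hypotheses $\tfrac{8-2b}{N}<\alpha<\tfrac{8-2b}{N-4}$ are precisely what make the intervening space-time pairs biharmonic-admissible and the embeddings $\dot H^{s_c}\hookrightarrow L^p$ available. On the ball one applies Hölder's inequality with $|x|^{-b}\in L^{\gamma}(\{|x|\le1\})$ for a suitable $\gamma<N/b$ (finite since $b<N/2$), distributing the remaining factors among Lebesgue norms controlled via Sobolev embedding by $\|u\|_{B(\dot H^{s_c})}$ and $\|u\|_{B(L^2)}+\|\Delta u\|_{B(L^2)}$. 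In the $\dot H^2$-level estimate, expanding $\Delta\bigl(|x|^{-b}|u|^{\alpha}u\bigr)$ produces, besides $|x|^{-b}\Delta(|u|^{\alpha}u)$, the terms $|x|^{-b-1}\nabla(|u|^{\alpha}u)$ and $|x|^{-b-2}|u|^{\alpha}u$, which I would absorb using Hardy's inequality $\||x|^{-1}f\|_{L^2}\lesssim\|\nabla f\|_{L^2}$ and its iterate $\||x|^{-2}f\|_{L^2}\lesssim\|\Delta f\|_{L^2}$, valid since $N\ge5$ and $b<4$. This weighted bookkeeping — choosing one family of exponents that simultaneously makes $|x|^{-b}$ locally integrable against the chosen Lebesgue norm, keeps the Sobolev embeddings valid, and keeps every space-time pair admissible — is the main obstacle; the improvement from $0<b<\tfrac32$ to the full range $0<b<\tfrac52$ in dimension $N=5$ comes precisely from treating the two derivatives in $\Delta(|x|^{-b}|u|^{\alpha}u)$ through this Hardy-type distribution rather than the coarser bound of \cite{GuzPas1} (supplemented, near the lower endpoint of $\alpha$, by a Besov-space refinement in the spirit of \cite{Xuan-Ting}).

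Granting the nonlinear estimates, the contraction closes in the standard manner. For $u\in X$ one obtains $\|\Phi(u)\|_{B(\dot H^{s_c})}\le\|e^{it\Delta^2}u_0\|_{B(\dot H^{s_c})}+\mathcal{E}_1$ and $\|\Phi(u)\|_{B(L^2)}+\|\Delta\Phi(u)\|_{B(L^2)}\le c\|u_0\|_{H^2}+\mathcal{E}_2$, and for $u,v\in X$ the Lipschitz estimates give $d(\Phi(u),\Phi(v))\le\mathcal{E}_3\,d(u,v)$, where each $\mathcal{E}_j$ is a product of a positive power of $\|e^{it\Delta^2}u_0\|_{B(\dot H^{s_c})}$ with factors bounded in terms of $E$ alone; since $\theta_1>1$, the quantity $\mathcal{E}_1$ is moreover of higher order in $\|e^{it\Delta^2}u_0\|_{B(\dot H^{s_c})}$ than the free term it is compared against. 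Hence, choosing $\delta_{sd}=\delta_{sd}(E)>0$ small enough, $\Phi$ maps $X$ into itself and is a $\tfrac12$-contraction, so it has a unique fixed point $u\in X$, which is the desired solution and satisfies the two displayed bounds of the statement. Finally, that $u$ is a genuine global $H^2$-solution of \eqref{IBNLS}, unique in the natural solution class, follows by combining this with the local well-posedness theory of \cite{GUZPAS,Xuan-Ting} and a routine continuation argument, all of its controlling norms being finite on all of $\mathbb{R}$.
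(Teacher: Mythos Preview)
Your contraction-mapping setup on $X$ matches the paper's, and reducing everything to nonlinear estimates carrying a positive power of $\|u\|_{B(\dot H^{s_c})}$ is the right shape. The issue is your proposed mechanism for the $\dot H^2$-level estimate, which is essentially the opposite of what the paper does.

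The paper never estimates $\Delta F$ in a dual Strichartz norm. It uses the gain-of-derivative Strichartz estimate
\[
\Bigl\|\Delta\int_0^t e^{i(t-s)\Delta^2}g(s)\,ds\Bigr\|_{B(L^2)} \lesssim \|\nabla g\|_{L^2_t L^{2N/(N+2)}_x},
\]
so that only $\nabla F$ has to be controlled, and the new ingredient (Lemma~\ref{L:NL}(iv)) bounds $\nabla F$ by a direct H\"older--Sobolev splitting that \emph{avoids} Hardy's inequality entirely: the key idea is to send more than one copy of $u$ into $H^2$-controlled norms via Sobolev embedding, trading part of the exponent $\alpha$ in order to keep every pair admissible. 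Your claim that the extension to $0<b<\tfrac52$ in $N=5$ comes from Hardy applied to the expansion of $\Delta F$ is therefore backwards: the earlier work \cite{GuzPas1} \emph{did} use Hardy (already at the level of $\nabla F$) and got stuck at $b<\tfrac32$ precisely because the Hardy route runs out of room in the Lebesgue exponents when $b$ is large. Your proposal to estimate $\Delta F$ directly introduces the even more singular factor $|x|^{-b-2}$, so it faces at least the same obstruction, and you have not shown that the exponent bookkeeping actually closes in the range $\tfrac32\le b<\tfrac52$. The Besov-space input of \cite{Xuan-Ting} is also not used anywhere in this proof.
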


To this end, we rely on the contraction mapping principle combined with the Strichartz estimates. The singular factor $|x|^{-b}$ brings some extra difficulties and the major problem is to estimate $\nabla (|x|^{-b}|u|^\alpha u)$. Here, by making use only of Sobolev and H\"older inequalities (therefore do not using Hardy inequality as in \cite{GuzPas1}), we were able to perform suitable nonlinear estimates, in carefully chosen Sobolev spaces (see Lemma \ref{L:NL} above), in order to estimate the inhomogeneous term in the whole desired range of parameters.     

Once the global result is established, the natural problem to study is the asymptotic behavior of such global solution as $t \rightarrow \infty$. The main goal in this paper is to show scattering of \eqref{IBNLS} in $H^{2}$ below the ground state. Before stating the result,
we recall the recently work showed in \cite{carguzpas2020}, which established sufficient conditions for global existence of solutions in $\dot H^{s_c}\cap H^2$ with $0\leq s_c < 2$. We state below this result for the intercritical case, that is, $0<s_c<2$ since this is the case in which we are interested.

\begin{theorem}\label{GWP} Assume $N\geq 3$, $\frac{8-2b}{N}<\alpha< 4^*$ and $0<b<\min\{\tfrac{N}{2},4\}$. Suppose $u_0 \in H^2$ obeys
\begin{equation}\label{cond1}
E[u_0]^{s_c}M[u_0]^{2-s_c}<E[Q]^{s_c}M[Q]^{2-s_c}
\end{equation}
and
\begin{equation}\label{cond2}
\|\Delta u_0\|_{L^2}^{s_c}\|u_0\|_{L^2}^{2-s_c}<\|\Delta Q\|_{L^2}^{s_c}\|Q\|_{L^2}^{2-s_c}.
\end{equation}
Then the corresponding solution $u$ is a global solution in $H^2$. In addition,
\begin{equation}\label{cond4}
\|\Delta u(t)\|_{L^2}^{s_c}\|u(t)\|_{L^2}^{2-s_c}<\|\Delta Q\|_{L^2}^{s_c}\|Q\|_{L^2}^{2-s_c}.
\end{equation}
Here $Q$ denotes the ground state solution to
$\Delta^2 Q - Q - |x|^{-b}Q^{\alpha+1}=0.$
\end{theorem}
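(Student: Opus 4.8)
The plan is to run the classical variational/trapping argument (in the spirit of Holmer--Roudenko and Guevara for the nonlinear Schr\"odinger equation), adapted to the fourth--order inhomogeneous setting; the only non--soft ingredient is the sharp weighted Gagliardo--Nirenberg inequality together with the Pohozaev identities for the ground state $Q$. Throughout set $p:=\frac{N\alpha+2b}{4}$, and note that $p=2+\tfrac{\alpha s_c}{2}$ and that the intercritical hypothesis $\frac{8-2b}{N}<\alpha<4^{*}$ is precisely $2<p<\alpha+2$. First I would record the sharp inequality
\[
\int_{\R^{N}}|x|^{-b}|u|^{\alpha+2}\,dx\;\le\;C_{\mathrm{GN}}\,\|\Delta u\|_{L^{2}}^{\,p}\,\|u\|_{L^{2}}^{\,\alpha+2-p},\qquad u\in H^{2}(\R^{N}),
\]
whose extremisers are, modulo the symmetries of the equation, the ground state $Q$, and take it --- along with the existence, regularity and sufficient decay of $Q$ --- as an input, since it belongs to the variational package behind the statement. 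Pairing the Euler--Lagrange equation for $Q$ with $Q$ and with the dilation generator $x\cdot\nabla Q$ produces two Pohozaev identities; solving the resulting $2\times 2$ linear system gives $\int|x|^{-b}Q^{\alpha+2}=\frac{\alpha+2}{p}\|\Delta Q\|_{L^{2}}^{2}$, hence $E[Q]=\bigl(\tfrac12-\tfrac1p\bigr)\|\Delta Q\|_{L^{2}}^{2}>0$ (using $p>2$) and an explicit value of $C_{\mathrm{GN}}$ in terms of $\|\Delta Q\|_{L^{2}}$ and $\|Q\|_{L^{2}}$.

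Next I would pass to the scale--invariant quantities $\mathcal{K}[u]:=\|\Delta u\|_{L^{2}}^{s_c}\|u\|_{L^{2}}^{2-s_c}/\bigl(\|\Delta Q\|_{L^{2}}^{s_c}\|Q\|_{L^{2}}^{2-s_c}\bigr)$ and $\mathcal{ME}[u]:=E[u]^{s_c}M[u]^{2-s_c}/\bigl(E[Q]^{s_c}M[Q]^{2-s_c}\bigr)$. Multiplying the energy by $M[u]^{(2-s_c)/s_c}$ and inserting the sharp inequality, the relation $p=2+\tfrac{\alpha s_c}{2}$ makes every exponent match and yields
\[
E[u]\,M[u]^{(2-s_c)/s_c}\;\ge\;g\bigl(\mathcal{K}[u]\bigr),\qquad g(y):=\tfrac{\kappa}{2}\,y^{2/s_c}-\tfrac{\kappa}{p}\,y^{p/s_c},\quad \kappa:=\|\Delta Q\|_{L^{2}}^{2}\,M[Q]^{(2-s_c)/s_c},
\]
with the constants fixed by the identities of the previous step. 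Since $2<p$, the function $g$ vanishes at $0$, is strictly increasing on $[0,1]$, strictly decreasing on $[1,\infty)$, and --- by the precise value of $\kappa/p$ --- attains its global maximum at $y=1$ with $g(1)=E[Q]M[Q]^{(2-s_c)/s_c}$. In particular $\mathcal{K}[u]<1$ already forces $E[u]>0$, so hypothesis \eqref{cond2} makes \eqref{cond1} meaningful; and \eqref{cond1} is then exactly $E[u_0]M[u_0]^{(2-s_c)/s_c}<g(1)$.

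Finally I would close with a continuity/trapping step. Let $u$ be the maximal--lifespan $H^{2}$ solution furnished by the local theory of \cite{GUZPAS}; then $t\mapsto\mathcal{K}[u(t)]$ is continuous on the maximal interval, and by conservation of mass and energy $E[u(t)]M[u(t)]^{(2-s_c)/s_c}\equiv E[u_0]M[u_0]^{(2-s_c)/s_c}<g(1)$ for all $t$. Hence $g(\mathcal{K}[u(t)])\le E[u(t)]M[u(t)]^{(2-s_c)/s_c}<g(1)$; since $\mathcal{K}[u(0)]<1$ by \eqref{cond2} and the value $\mathcal{K}[u(t)]=1$ would give the contradiction $g(1)<g(1)$, continuity forces $\mathcal{K}[u(t)]<1$ for all $t$ --- which is exactly \eqref{cond4}. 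Combining $\mathcal{K}[u(t)]<1$ with mass conservation bounds $\|\Delta u(t)\|_{L^{2}}$, hence $\|u(t)\|_{H^{2}}$, uniformly in time, and the blow--up alternative then forces $u$ to be global.

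The part requiring genuine care is the first half: establishing the ground--state identities and then matching the exponents and constants so that $\max g$ sits exactly at $\mathcal{K}=1$; by contrast the trapping, the continuity of $t\mapsto\mathcal{K}[u(t)]$, and the invocation of the blow--up alternative are soft. If one prefers not to invoke the sharp inequality as a black box, it can be proved by a standard concentration--compactness/profile--decomposition argument in $H^{2}$, which also identifies the optimisers with $Q$.
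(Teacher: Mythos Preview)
Your argument is correct and is the canonical Holmer--Roudenko style energy-trapping route for sub-threshold global existence. Note, however, that the present paper does not itself prove Theorem~\ref{GWP}: it is quoted verbatim from \cite{carguzpas2020} as a known input (see the sentence immediately preceding the statement), so there is no in-paper proof to compare against. The argument in \cite{carguzpas2020} is essentially the one you describe: the sharp weighted Gagliardo--Nirenberg inequality with optimiser $Q$, the Pohozaev identities giving $\int|x|^{-b}Q^{\alpha+2}=\tfrac{\alpha+2}{p}\|\Delta Q\|_{L^2}^{2}$ and $E[Q]=\tfrac{p-2}{2p}\|\Delta Q\|_{L^2}^{2}$, and the one-variable analysis of $g$ followed by continuity and the blow-up alternative. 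Your exponent bookkeeping (in particular $p=2+\tfrac{\alpha s_c}{2}$ and $(p-2)\tfrac{2-s_c}{s_c}=\alpha+2-p$, which forces $g$ to peak exactly at $\mathcal{K}=1$ with value $E[Q]M[Q]^{(2-s_c)/s_c}$) is right. The coercivity Lemmas~\ref{lem_coerc_1} and~\ref{lem_coerc_2}, which this paper also imports (from \cite{SaanouniRadial}) rather than proves, are the quantitative refinements that fall out of the very same variational package you outline.
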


Here, we prove, for either radial or non-radial initial data, that the global solution obtained in the above theorem also scatters in $H^2$ for $N\geq 5$ without imposing any extra restriction on the parameters $\alpha$ and $b$. The method of proof is based on the ideas developed in \cite{MurphyNonradial}, which established scattering for the $3D$ cubic inhomogeneous NLS equation\footnote{Inhomogeneous NLS model: $iu_t + \Delta u + |x|^{-b}|u|^{\alpha}u=0$.} assuming non-radial data (see also \cite{CardosoCampos} for higher dimensions). It avoids the concentration-compactness and rigidity technique developed by Kenig and Merle \cite{KENIG}. Note that we do not have the Galilean transformation for the IBLNS, nor conservation of momentum, which makes the concentration-compactness approach even harder if one wants to consider non-radial data. Our main result is the following.

\begin{theorem}\label{Scattering}
Assume $N\geq 5$, $0<b<\min\{\frac{N}{2},4\}$, and $\frac{8-2b}{N}<\alpha<\frac{8-2b}{N-4}$. If \eqref{cond1} and \eqref{cond2} hold, then the corresponding solution $u$ to \eqref{IBNLS} scatters in $H^2$ both forward and backward in time. That is, there exist $\phi^{\pm}$ such that
$$
\lim_{t\rightarrow \;\pm \infty}\|u(t)-e^{it\Delta^2}\phi^{\pm}\|_{H^2}=0.
$$
\end{theorem}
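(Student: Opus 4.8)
The strategy I would follow is the now-standard "scattering via Virial–Morawetz + decay of the nonlinearity" roadmap, adapted from the inhomogeneous NLS setting of Murphy and of Cardoso–Campos to the biharmonic case. I would first record the preliminary ingredients: (i) the global well-posedness and the uniform-in-time coercivity bound $\|\Delta u(t)\|_{L^2}^{s_c}\|u(t)\|_{L^2}^{2-s_c} < \|\Delta Q\|_{L^2}^{s_c}\|Q\|_{L^2}^{2-s_c}$ from Theorem~\ref{GWP}, which in particular keeps $\|u(t)\|_{H^2}$ bounded; (ii) a small-data/perturbation theory (scattering criterion) stating that if a global $H^2$ solution has finite $B(\dot H^{s_c})$ Strichartz norm on $[0,\infty)$, then it scatters forward in time — this follows from the nonlinear estimates of Lemma~\ref{L:NL} and Theorem~\ref{GWPH2} by a standard continuity/bootstrap argument together with the Duhamel formula and the dispersive decay of $e^{it\Delta^2}$. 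Backward scattering is identical by time-reversal, so I would concentrate on $t\to+\infty$.

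The heart of the argument is to show that $\|u\|_{B(\dot H^{s_c})([0,\infty))} < \infty$. Arguing by contradiction, if this norm is infinite one runs a standard argument to produce, along a sequence of times, "bubbles" of mass that do not disperse; more precisely one shows that there is a sequence $t_n\to\infty$ and a (possibly rescaled) profile such that $u$ fails to scatter. To rule this out, I would prove a Virial–Morawetz-type estimate: choosing a radial weight $a(x)$ with $a(x)\sim |x|$ for $|x|\le R$ and $a(x)$ bounded for $|x|\ge 2R$ (so that $\nabla a, \nabla^2 a,\dots$ up to fourth order are controlled), and setting the Morawetz action $M_R(t) = 2\,\mathrm{Im}\int \overline{u}\,\nabla a\cdot\nabla u\,dx$ (the biharmonic analogue involves $\int \nabla a \cdot \{\text{terms with } \Delta u, \nabla\Delta u\}$), one computes $\frac{d}{dt}M_R(t)$ and extracts, modulo error terms supported in $|x|\ge R$ and terms gaining a power of $1/R$, a multiple of the coercive quantity

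\begin{equation*}
\int |x|^{-b}|u|^{\alpha+2}\,dx \quad\text{controlled below by}\quad \text{(a positive constant)}\cdot\|u(t)\|_{H^2 \text{-type quantity}},
\end{equation*}

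where positivity comes precisely from the sub-threshold condition \eqref{cond1}–\eqref{cond2} and the sharp Gagliardo–Nirenberg-type inequality attached to the ground state $Q$. Here the inhomogeneity is a help: the factor $|x|^{-b}$ makes the "bad" far-away errors (those living in $|x|\gtrsim R$) small like $R^{-b}$, which is exactly what lets one dispense with the radial assumption — the weight need not be globally comparable to $|x|$, one only needs enough decay of the nonlinearity to absorb the exterior terms.

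Integrating $\frac{d}{dt}M_R$ over $[0,T]$ and using $|M_R(t)|\lesssim R\,\|u(t)\|_{H^2}^2 \lesssim R$ uniformly in $t$, one obtains

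\begin{equation*}
\frac{1}{T}\int_0^T \Big(\text{coercive quantity}\Big)(t)\,dt \;\lesssim\; \frac{R}{T} + o_R(1),
\end{equation*}

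so letting first $T\to\infty$ and then $R\to\infty$ forces the time-averaged coercive quantity to vanish, hence there is a sequence $t_n\to\infty$ with $\|u(t_n)\|_{\dot H^{s_c}}\to 0$ (or the relevant potential energy $\to 0$), which via the small-data scattering criterion applied starting from $t_n$ contradicts the assumed infiniteness of the global Strichartz norm. This closes the argument. I expect the main obstacle to be the algebra of the biharmonic Morawetz identity: unlike the Laplacian case, $\frac{d}{dt}M_R$ produces a proliferation of terms involving $\Delta u$, $\nabla\Delta u$ and up to four derivatives of the weight $a$, and one must check that, after the weighted integration by parts, the leading term is genuinely the coercive quantity with a favorable sign while all remaining contributions are either controlled by $R^{-b}$ (exterior region, using $|x|^{-b}\lesssim R^{-b}$) or by $R^{-2}$ (derivative-of-weight errors) times bounded quantities. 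A secondary technical point is ensuring the nonlinear and Strichartz estimates of Lemma~\ref{L:NL} upgrade to a genuine scattering/perturbation statement uniform over translated initial times, which is routine given the global bound $\|u(t)\|_{H^2}\lesssim 1$.
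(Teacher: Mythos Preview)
Your high-level roadmap (Virial--Morawetz $\Rightarrow$ time-averaged decay of the potential energy $\Rightarrow$ a good sequence $t_n$ $\Rightarrow$ small-data scattering) matches the paper, and you correctly identify that the factor $|x|^{-b}$ is what kills the exterior errors without radiality. However, two of your choices would not close as written.

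First, the weight. The paper takes $a_R(x)=|x|^2$ for $|x|\le R/2$ and $a_R(x)=R|x|$ for $|x|>R$; that is, \emph{virial inside and Morawetz outside}, never bounded. Your proposal ``$a(x)\sim|x|$ for $|x|\le R$ and bounded for $|x|\ge 2R$'' fails on both ends. Inside, one needs the $|x|^2$ weight so that the leading contribution to $Z'(t)$ is exactly $\int_{|x|\le R/2}\bigl[|\Delta u|^2-\tfrac{N\alpha+2b}{4(\alpha+2)}|x|^{-b}|u|^{\alpha+2}\bigr]$, to which the coercivity Lemma~\ref{lem_coerc_2} applies; with $a\sim|x|$ you would only get angular pieces of the kinetic energy and a potential term with an extra $|x|^{-1}$, and the sub-threshold hypothesis no longer enters. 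Outside, a bounded weight forces $\partial_r^2 a<0$ in the transition region, and then the kinetic terms $-\sum_{i,j,k}\int\partial_{jk}a_R\,\partial_{ik}\bar u\,\partial_{ij}u$ acquire the wrong sign; with the linear-growth weight one has $\partial_r^2 a_R=0$ and $\partial_r a_R/|x|>0$, so the radial and angular kinetic terms keep a favorable sign even for non-radial $u$ and can be discarded.

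Second, and more seriously, the step from ``$\int|x|^{-b}|u(t_n)|^{\alpha+2}\to 0$'' to ``small-data scattering from $t_n$'' is not a one-liner, and your contradiction/bubble framing is precisely the concentration-compactness route the paper avoids. What the paper actually does is: (i) by H\"older, the potential-energy smallness gives $\int_{|x|\le R}|u(t_n)|^2\to 0$; (ii) Proposition~\ref{scattering_criterion} (a Tao-type criterion) then shows that $\|e^{i(\cdot-T)\Delta^2}u(T)\|_{B(\dot H^{s_c},[T,\infty))}\lesssim\epsilon^\gamma$. Step (ii) is the real work: one splits Duhamel at time $T$ into a ``recent past'' $[T-\epsilon^{-\mu},T]$ and a ``distant past'' $[0,T-\epsilon^{-\mu}]$. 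The recent past is small because of the local mass smallness inside $B_R$ \emph{and} the decay $|x|^{-b}\lesssim R^{-b}$ outside $B_R$ (this is the second place where non-radiality is handled). The distant past is small by the dispersive estimate $\|e^{it\Delta^2}\|_{L^{r'}\to L^r}\lesssim |t|^{-\frac{N}{4}(1-\frac{2}{r})}$, which is integrable on $[\epsilon^{-\mu},\infty)$ only because $N\ge 5$. Your sketch does not contain this Duhamel splitting, and without it there is no mechanism to pass from smallness of a nonlinear quantity at one time to smallness of the linear Strichartz norm on $[T,\infty)$.
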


In the particular case $b=0$ (BNLS), the scattering result was showed by several authors. We mention two works that considered the $L^2$-supercritical and $\dot{H}^2$-subcritical case. Using the concentration-compactness and
rigidity argument, Guo in \cite{Guo}, for $N\geq 1$, showed scattering for the radial initial data and very recently, Dinh in \cite{DinhRadialScattersIBNLS} gave an alternative proof using the ideas of Dodson-Murphy \cite{Dod-Mur}, for $N\geq 2$. The non-radial setting is still an open problem for $b=0$. 

Theorem \ref{Scattering} extends the result showed by Saanouni \cite{SaanouniRadial} from the radial to the non-radial setting, which also assumed an extra lower bound on $\alpha$, $\alpha>\{\frac{8-2b}{N},x_0\}$, where $x_0$ is the positive root of $(2x -1)(x-1)-\frac{4-b}{N-4}$. However, this proof is questionable since the estimate used in the proof of small data scattering in \cite{SaanouniRadial} (after Proposition $3.8$)
$$
\|u(t)-e^{it\Delta^2}\phi^{\pm}\|_{H^2}\lesssim \|u\|_{L^\infty_{[t,\infty)} H^2}\|u\|^{\alpha+1}_{L^a_{[t,\infty)}L^r_x},
$$
does not
seem clear to us. In addition, in dimension $N=5$, they assumed $\alpha<8-2b$, but as was mentioned above, in the previous works \cite{GUZPAS, GuzPas1}, it was necessary to assume an extra hypothesis on $\alpha$ or $b$ to estimate the derivative of  $|x|^{-b}|u|^{\alpha }u$, and in \cite{SaanouniRadial}, they did not provide a new one without these restrictions. Here, we removed the extra conditions, and thus our result fits into the broader context of sharp scattering thresholds for the intercritical inhomogeneous biharmonic NLS equation.
The argument exploits the decay of the nonlinearity together with Virial-Morawetz-type estimates. It gives an simple proof for the energy scattering that completely avoids the use of the concentration-compactness and
rigidity argument. 

\begin{remark}
The restriction to $N \geq 5$ comes from relying on the decay of the linear biharmonic operator, which, by Strichartz estimates, can only reach up to $|t|^{-N/4}$, therefore not being integrable on $[1,+\infty)$ when $N \leq 4$. The argument can be extended to radial data in $N \geq 3$, by using the Strauss Lemma to get a faster decay in the Virial-Morawetz estimate \eqref{virial-morawetz-decay}. Note that, since we are only converting the space decay of the inhomogeneity $|x|^{-b}$ in time decay, it can be very slow if $b$ is close to zero. Scattering for non-radial data in lower dimensions still remains an open problem.
\end{remark}

We finally state the scattering criterion that will be used to show Theorem \ref{Scattering}. It was first proved for the radial $3D$ cubic NLS equation by Tao \cite{Tao}, and here we prove this result in a non-radial setting for the IBNLS. 
\begin{proposition}[Scattering criterion]\label{scattering_criterion}
Assume the same hypotheses as in Theorem \ref{Scattering}. Consider an $H^2(\mathbb{R}^N)$-solution $u$ to \eqref{IBNLS} defined on $[0,+\infty)$ and assume the a priori bound 
\begin{equation}\label{E}
\displaystyle\sup_{t \in [0,+\infty)}\left\|u(t)\right\|_{H^2_x} := E < +\infty.
\end{equation}

There exist constants $R > 0$ and $\epsilon>0$ depending only on $E$, $N$, $\alpha$ and $b$ (but never on $u$ or $t$) such that if
\begin{equation}\label{scacri}
\liminf_{t \rightarrow +\infty}\int_{B(0,R)}|u(x,t)|^2 \, dx \leq \epsilon^2,
\end{equation}
then $u$ scatters forward in time in $H^2(\mathbb{R}^N)$.
\end{proposition}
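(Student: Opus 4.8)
The plan is to deduce scattering from the small-data theory of Theorem \ref{GWPH2} by producing, at a suitably late time $T$, the required smallness of the free evolution of $u(T)$ in the critical Strichartz norm $B(\dot H^{s_c})$. Since $u$ satisfies the uniform bound \eqref{E}, a standard argument shows it suffices to find $T\geq 0$ such that
\[
\|e^{i(t-T)\Delta^2}u(T)\|_{B(\dot H^{s_c};[T,\infty))}\leq \delta_{sd}(E).
\]
Indeed, Theorem \ref{GWPH2} (with initial time $T$ and size $E$) then yields a solution on $[T,\infty)$ with finite $B(\dot H^{s_c})$ and $B(L^2)$ norms which, by uniqueness, agrees with $u$; finiteness of these norms together with \eqref{E} and the dual Strichartz estimate makes $e^{-it\Delta^2}u(t)$ Cauchy in $H^2$ as $t\to+\infty$, i.e. $u$ scatters. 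By Duhamel's formula, for $t\geq T$,
\[
e^{i(t-T)\Delta^2}u(T)=e^{it\Delta^2}u_0-i\int_0^{T}e^{i(t-s)\Delta^2}F(u(s))\,ds,\qquad F(u):=|x|^{-b}|u|^{\alpha}u,
\]
and I would split the Duhamel integral at time $T-\mathcal T$, for a large parameter $\mathcal T$ to be fixed, into a \emph{far-past} piece and a \emph{recent} piece, aiming to make each of the three resulting terms at most $\tfrac13\delta_{sd}(E)$.

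For the free term, $0<s_c<2$ gives $u_0\in H^2\hookrightarrow\dot H^{s_c}$, so $\|e^{it\Delta^2}u_0\|_{B(\dot H^{s_c};\mathbb{R})}<\infty$ and its restriction to $[T,\infty)$ tends to $0$ as $T\to+\infty$; I would therefore take $T$ large, requiring moreover --- using the hypothesis \eqref{scacri} --- that $T$ lie along a sequence on which $\int_{B(0,R)}|u|^2\leq\epsilon^2$, for $R,\epsilon$ chosen below. For the far-past piece I would use the identity
\[
-i\int_0^{T-\mathcal T}e^{i(t-s)\Delta^2}F(u(s))\,ds=e^{i(t-(T-\mathcal T))\Delta^2}w,\qquad w:=u(T-\mathcal T)-e^{i(T-\mathcal T)\Delta^2}u_0,
\]
noting $\|w\|_{H^2}\leq 2E$ and, for $t\geq T$, that the propagator is evaluated at time $\geq\mathcal T$. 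Decomposing $w=P_{\leq M}w+P_{>M}w$: the high frequencies contribute $\lesssim M^{s_c-2}\|w\|_{\dot H^2}\lesssim M^{s_c-2}E$ to the critical norm (here $s_c<2$ is essential), while for the low frequencies I would use the biharmonic dispersive bound $\|e^{i\tau\Delta^2}g\|_{L^r_x}\lesssim|\tau|^{-\frac N2(\frac12-\frac1r)}\|g\|_{L^{r'}_x}$ together with Bernstein and integrate the time decay over $\tau\in[\mathcal T,\infty)$ --- this is integrable precisely because $N\geq 5$, the best available rate being $|\tau|^{-N/4}$. Choosing first $M$ and then $\mathcal T$ large (in terms of $E,N,\alpha,b$) makes the far-past piece $\leq\tfrac13\delta_{sd}(E)$, uniformly over all $w$ with $\|w\|_{H^2}\leq 2E$.

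It remains to bound the recent piece $-i\int_{T-\mathcal T}^{T}e^{i(t-s)\Delta^2}F(u(s))\,ds$, which by the inhomogeneous Strichartz estimate is controlled by the dual-Strichartz norm of $F(u)$ on $[T-\mathcal T,T]$, and this is where the localized-mass hypothesis is consumed. First I would record an almost-conservation of localized mass: with $\chi$ a cutoff to $\{|x|\leq1\}$ and $m_R(t):=\int\chi(x/R)|u(t,x)|^2dx$, an integration by parts (the nonlinear contribution being real) gives $|m_R'(t)|\lesssim R^{-1}E^2$, hence $m_R(t)\leq m_R(T)+C(E)\mathcal T R^{-1}\leq\epsilon^2+C(E)\mathcal T R^{-1}$ on $[T-\mathcal T,T]$ once $T$ is chosen along the sequence above. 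Then I would split $F(u)=\chi_{|x|<R}F(u)+\chi_{|x|\geq R}F(u)$: on $\{|x|\geq R\}$ one has $|x|^{-b}\leq R^{-b}$ and a crude H\"older--Sobolev bound using \eqref{E} and that the interval has length $\mathcal T$ gives a contribution $\lesssim R^{-b}\mathcal T^{\theta}E^{\alpha+1}$; on $\{|x|<R\}$ one writes $|x|^{-b}|u|^{\alpha+1}=(|x|^{-b}\chi_{|x|<R})|u|^{a}|u|^{\alpha+1-a}$, pairs $|u|^a$ with an $L^2$-type factor to extract $(m_R(t))^{a/2}\lesssim(\epsilon^2+C(E)\mathcal T R^{-1})^{a/2}$, uses $\||x|^{-b}\chi_{|x|<R}\|_{L^{p_1}}\sim R^{N/p_1-b}$ with $p_1<N/b$, and bounds the remaining factor by Sobolev embedding and \eqref{E}. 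The exponents $a,p_1$ and the Strichartz pair can be chosen --- exactly as in the local nonlinear estimates (Lemma \ref{L:NL}) --- so that $N/p_1<b+\tfrac a2$, making the net power of $R$ negative; taking then $R$ large (in terms of $\mathcal T,E,N,\alpha,b$) and finally $\epsilon$ small makes the recent piece $\leq\tfrac13\delta_{sd}(E)$. Since $\mathcal T$, then $R$, then $\epsilon$ are fixed in this order depending only on $E,N,\alpha,b$, and \eqref{scacri} is invoked afterwards with these $R,\epsilon$ to select $T\to+\infty$, there is no circularity.

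The main obstacle is the recent-piece estimate: one must simultaneously tame the singularity $|x|^{-b}$ against the large localization radius $R$ and extract the localized-mass smallness, keeping every Lebesgue and Sobolev exponent inside the admissible windows of the intercritical range (the same algebra as in Lemma \ref{L:NL}) while forcing the net power of $R$ to be negative, and one must check that the almost-conservation error $C(E)\mathcal T R^{-1}$ is small independently of $\epsilon$ so that the order $\mathcal T\to R\to\epsilon$ of choosing constants is genuinely non-circular. A secondary technical point is the low-frequency far-past term, where the biharmonic decay $|\tau|^{-N/4}$ must be integrated against the Strichartz exponent, which is the source of the hypothesis $N\geq 5$; in the radial case the Strauss lemma would supply extra decay and allow lower dimensions. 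The overall scheme follows \cite{Tao,MurphyNonradial,CardosoCampos}.
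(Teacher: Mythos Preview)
Your overall strategy --- Duhamel split into a free term, a far-past piece, and a recent piece, then small-data theory --- matches the paper's, and your treatment of the free term and of the recent piece is essentially correct (the paper absorbs $|x|^{-b}$ on the inside entirely via the H\"older/Sobolev chain of \cite[Lemma~4.2]{GUZPAS}, so no positive power of $R$ ever appears there, but your ordering $\mathcal T\to R\to\epsilon$ also closes).

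The far-past piece, however, has a genuine gap. You rewrite it as $e^{i(t-(T-\mathcal T))\Delta^2}w$ with $\|w\|_{H^2}\leq 2E$ and propose to handle the low-frequency part $P_{\leq M}w$ by ``the biharmonic dispersive bound together with Bernstein''. This does not work: the dispersive estimate requires $P_{\leq M}w\in L^{r'}_x$ with $r'<2$, and Bernstein on frequency-localized data only \emph{raises} the Lebesgue exponent, so no such bound follows from $\|w\|_{H^2}$. More to the point, the decay you need is simply false uniformly over the ball $\{\|w\|_{H^2}\leq 2E\}$: take $w_n=e^{-in\Delta^2}\phi$, which has fixed $H^2$ norm, yet $\|e^{i\tau\Delta^2}w_n\|_{L^q_{[\mathcal T,\infty)}L^r_x}=\|e^{i\tau\Delta^2}\phi\|_{L^q_{[\mathcal T-n,\infty)}L^r_x}\to\|e^{i\tau\Delta^2}\phi\|_{L^q_{\R}L^r_x}>0$ as $n\to\infty$. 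Since $w$ depends on $T$, which you choose \emph{after} $\mathcal T$, you cannot fix $\mathcal T$ first as claimed.

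The paper avoids this by keeping the far-past in Duhamel form $F_2=\int_{I_2}e^{i(t-s)\Delta^2}F(u(s))\,ds$ and interpolating the target $\dot H^{s_c}$-admissible norm between an $L^2$-admissible norm (controlled by Strichartz and $\|w\|_{L^2}$) and a near-endpoint norm $L^{1/\delta}_tL^{2N/(N-4-8\delta)}_x$. For the second factor it applies the dispersive decay \emph{to the nonlinearity}: the key input, stated as the Remark after Lemma~\ref{L:NL}, is that $\||x|^{-b}|u|^{\alpha}u\|_{L^{r'}_x}\lesssim\|u\|_{H^2}^{\alpha+1}$ for $r'\in\bigl(\tfrac{2N}{N+8},\tfrac{2N}{N+4}\bigr)$, which is exactly what places the source in the $L^{r'}$ space the dispersive estimate needs. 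Integrating $|t-s|^{-(1+2\delta)}$ over $s\in I_2$ and then in $L^{1/\delta}_{[T,\infty)}$ gives $\mathcal T^{-\delta}$ (in the paper $\mathcal T=\epsilon^{-\mu}$, so this is $\epsilon^{\mu\delta}$); $N\geq5$ is what makes the decay exponent exceed $1$. Replace your Littlewood--Paley step with this argument.
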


Note that this result shows that general solutions to the IBNLS under the ground state behave similarly to radial solutions, in the sense that it is enough to show that the mass of the solution escapes from a (possibly large) ball centered at the origin to prove scattering. In comparison to the concentration-compactness and rigidity approach, this means that even when $b$ is very close to zero, the inhomogeneous factor $|x|^{-b}$ automatically precludes the existence of an unbounded translation parameter in the compactness part, even though the Galilean boost is unavailable for this equation. This last claim will be clarified in a future work, for which the framework will be particularly useful to handle the non-radial case in lower dimensions.

The rest of the paper is organized as follows. In section $2$, we introduce some notations, give a review of the Strichartz estimates and prove Theorem \ref{GWPH2}. In Section $3$ we obatin the scattering criterion and finally, in Section $4$, we show the main result (Theorem \ref{Scattering}).

\ 

\section{\bf Notation and Preliminaries}\label{sec2}

Let us start this section by introducing the notation used throughout the paper. We write $a\lesssim b$ to denote $a\leq cb$ for some $c>0$, denoting dependence on various parameters with subscripts when necessary. 

We make use of the standard Lebesgue spaces $L^p$, the mixed Lebesgue spaces $L_t^q L_x^r$, as well as the homogeneous and inhomogeneous Sobolev spaces $\dot H^{s,r}$ and $H^{s,r}$.  When $r=2$, we write $\dot H^{s,2}=\dot H^s$ and $H^{s,2}=H^s$.  If necessary, we use subscripts to specify which variable we are concerned with. We use $'$ to denote the H\"older dual. 

\subsection{Well-posedness theory}

To discuss the well-posedness theory for \eqref{IBNLS}, we first recall the Strichartz estimates in the form that we will need them. We say the pair $(q, r)$ is biharmonic Schr\"odinger admissible (B-admissible for short) if it satisfies
$$
\frac{4}{q}+\frac{N}{r}=\frac{N}{2},
$$
where 
\begin{equation}\label{L2Admissivel}
\begin{cases}
2\leq  r  < \frac{2N}{N-4},\hspace{0.5cm}\textnormal{if}\;\;\;  N\geq 5,\\
2 \leq  r < + \infty,\;  \hspace{0.5cm}\textnormal{if}\;\;\; N=4\\
2 \leq  r \leq + \infty,\;  \hspace{0.5cm}\textnormal{if}\;\;\;1\leq N\leq 3.
\end{cases}
\end{equation}

Given a real number $s<2$, we also called the pair $(q, r)$ is $H^s$-biharmonic admissible if  
$$
\frac{4}{q}+\frac{N}{r}=\frac{N}{2}-s,
$$ 
with
\begin{equation}\label{HsAdmissivel}
\begin{cases}
\frac{2N}{N-2s} \leq  r  <\frac{2N}{N-4}\;\;\;\textnormal{if}\;\; \; N\geq 5,\\
\frac{2N}{N-2s} \leq  r < + \infty\;\;\;  \textnormal{if}\;\;\;1\leq N\leq 4.
\end{cases}
\end{equation}

Given $s\in\R$, we define $\mathcal{A}_s$ to be the set of $\dot H^{s}$-biharmonic admissible pairs and introduce the Strichartz norm
\[
\|u\|_{B(\dot H^s,I)} = \sup_{(q,r)\in\mathcal{B}_s}\|u\|_{L_I^q L_x^r}.
\]
In the same way, the dual Strichartz norm is given by
\[
\|u\|_{B'(\dot H^{-s},I)} = \inf_{(q,r)\in\mathcal{B}_{-s}}\|u\|_{L_I^{q'} L_x^{r'}}.
\]
If $s=0$ then $B_0$ is the set of all B-admissible pairs. Thus, $\|u\|_{B(L^2,I)}=\sup_{(q,r)\in B_0}\|u\|_{L^q_IL^r_x}$ and $\|u\|_{B'(L^2,I)}=\inf_{(q,r)\in B_0}\|u\|_{L^{q'}_IL^{r'}_x}$. We also define the following norm
\begin{equation}\label{norm-H2}
    \|\langle \Delta \rangle u \|_{B(L^2,I)}=\|u\|_{B(L^2,I)}+\|\Delta u\|_{B(L^2,I)}.
\end{equation} 
When the $x$-integration is restricted to a subset $A \subset \R^N$ the mixed norm will be denoted by $\|f\|_{L^q_IL^r(A)}$. If $I=\mathbb{R}$, we often omit $I$.   

We now recall the Strichartz estimates for the fourth-order Schrödinger equation, which are the main tools for studying \eqref{IBNLS}, especially for well-posedness and scattering. The last one is the Strichartz estimate with a gain of derivative. See for instance, \cite{Pausader07} and \cite{GUZPAS} (see also \cite{Guo}). 
\begin{align}\label{SE1}
\qquad \quad\|e^{it\Delta^2}f\|_{B(\dot H^s,I)}\;\; \lesssim  \|f\|_{\dot H^{s}}
\end{align}
\begin{align}\label{SE2} \noeqref{SE2}
\biggl\| \int_0^t e^{i(t-t')\Delta^2} g(t')\,dt'\biggr\|_{B(\dot H^s,I)} &\lesssim \|g\|_{B'(\dot H^{-s},I)}. 
\end{align}
\begin{align}\label{SE3} \noeqref{SE3}
\biggl\|\Delta \int_0^t e^{i(t-t')\Delta^2} g(t')\,dt'\biggr\|_{B(L^2,I)} &\lesssim \|\nabla g\|_{L^2_IL^{\frac{2N}{N+2}}_x}. 
\end{align}
\begin{remark}
We also obtain (local-in-time estimate)
\begin{align}\label{SE4}
\biggl\| \int_a^b e^{i(t-t')\Delta^2} g(t')\,dt'\biggr\|_{B(\dot H^s,\R)} &\lesssim \|g\|_{B'(\dot H^{-s},[a,b])}. 
\end{align}
\end{remark}

Now, we turn our attention to proof Theorem \ref{GWPH2}. As usual the core of the proof is to establish good estimates on the nonlinearity $F(x,u)=|x|^{-b}|u|^\alpha u$. The next lemma provides these estimates, which also play an important role in proving the main result.

\begin{lemma}[Nonlinear estimates]\label{L:NL} Let $N\geq 5$, $0 < b < \min\{\frac{N}{2},4\}$ and $\frac{8-2b}{N}<\alpha<\frac{8-2b}{N-4}$. There exist positive parameters $\theta \ll \alpha$ and $\alpha_1 < \alpha - \theta$ such that
\begin{itemize}
\item [(i)] $\left \||x|^{-b}|u|^\alpha v \right\|_{B'(\dot{H}^{-s_c},I)} \lesssim \| u\|^{\theta}_{L^\infty_tH^2_x}\|u\|^{\alpha-\theta}_{B(\dot{H}^{s_c},I)} \|v\|_{B(\dot{H}^{s_c},I)}$,
\item [(ii)] $\left\||x|^{-b}|u|^\alpha v \right\|_{B'(L^2,I)}\lesssim \| u\|^{\theta}_{L^\infty_tH^2_x}\|u\|^{\alpha-\theta}_{B(\dot{H}^{s_c},I)} \| v\|_{B(L^2,I)},
$
\item [(iii)] $\left\|\nabla F(x,u)\right\|_{L^2_IL_x^{\frac{2N}{N+2}}}\lesssim 
\| u\|^{\theta}_{L^\infty_tH^2_x}\|u\|^{\alpha-\theta}_{S(\dot{H}^{s_c},I)} \|\Delta u\|_{B(L^2,I)},\; N >5 \text{ or } N = 5 \text{ and } \alpha < 7 - 2b,
$
\item [(iv)] $\left\|\nabla F(x,u)\right\|_{L^2_IL_x^{\frac{2N}{N+2}}}\lesssim 
\|u\|^{\theta}_{L^\infty_I H^2}
\|u\|^{\alpha_1}_{B(\dot{H}^{s_c})}
\|\langle \Delta \rangle u\|_{B(L^2)}^{1+\alpha-\alpha_1-\theta},\;N = 5 \;\; \text{ and }\; \alpha \geq 7-2b.
$

\end{itemize}
\end{lemma}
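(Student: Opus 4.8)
The plan is to prove all four estimates by the same mechanism: fix exponents carefully, split the spatial integral into the region near the origin $B = B(0,1)$ and its complement $B^c$, use Hölder in space and time together with the biharmonic Sobolev embeddings to absorb the singular (resp.\ decaying) weight $|x|^{-b}$, and then trade a small power $\theta$ of the solution in $L^\infty_t H^2_x$ against powers in the scattering norm $B(\dot H^{s_c})$ via interpolation. The $L^\infty_t H^2_x$ factor is what makes the remaining powers subcritical, which is exactly the structure needed later for the scattering/stability argument. So the first step is bookkeeping: introduce a small parameter $\theta \ll \alpha$, and for each item write down admissible pairs $(q,r) \in \mathcal{B}_{\pm s_c}$ (or $\mathcal{B}_0$) on the left, pick the exponent $r$ for the dual side, and solve the resulting Hölder relations in both the $x$ and $t$ variables; the relations are consistent precisely because of the strict inequalities $\frac{8-2b}{N} < \alpha < \frac{8-2b}{N-4}$, which give room to place $r$ strictly between $2$ and $\frac{2N}{N-4}$.

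For item (i), I would estimate $\||x|^{-b}|u|^\alpha v\|_{L^{q'}_I L^{r'}_x}$ for a suitable $\dot H^{-s_c}$-admissible $(q',r')$. On $B^c$ the weight is bounded, so one just distributes $|u|^\alpha v$ among Lebesgue norms: put $v$ and $\alpha - \theta$ copies of $u$ in $\dot H^{s_c}$-admissible $L^q_I L^r_x$ norms (using $\|u\|_{L^q_I L^r_x} \lesssim \|u\|_{B(\dot H^{s_c})}$ after a Sobolev embedding on the $L^r_x$ level), and put the remaining $\theta$ copies of $u$ in $L^\infty_t L^\rho_x$ with $\rho$ chosen so that $H^2 \hookrightarrow L^\rho$. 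On $B$ one applies Hölder with an extra exponent carrying $|x|^{-b}$, which lies in $L^\sigma(B)$ for any $\sigma < N/b$; since $b < N/2$ one can always afford such a $\sigma$. Item (ii) is identical except the left side is measured in $B'(L^2)$ and $v$ is placed in an $L^2$-admissible norm rather than an $\dot H^{s_c}$ one; the weight is handled the same way.

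Items (iii) and (iv) are the heart of the lemma and the main obstacle. Here one must differentiate $F(x,u) = |x|^{-b}|u|^\alpha u$, producing two terms: $\nabla(|x|^{-b}) |u|^\alpha u \sim |x|^{-b-1}|u|^\alpha u$ and $|x|^{-b}|u|^\alpha \nabla u$. The second is controlled as before, now ending with $\|\nabla u\|$ which one promotes to $\|\Delta u\|_{B(L^2)}$ by a Gagliardo–Nirenberg/Sobolev step (using that $\langle \Delta\rangle$ controls one derivative). The genuinely delicate term is the one with $|x|^{-b-1}$: near the origin one needs $|x|^{-b-1} \in L^\sigma(B)$, i.e.\ $\sigma < N/(b+1)$, which is a strictly worse constraint. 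For $N > 5$, or $N = 5$ with $\alpha < 7-2b$, there is still enough integrability and enough room in the Hölder exponents (because the target $L^{2N/(N+2)}_x$ is comfortably inside the allowed range) to close with the stated form, ending in $\|\Delta u\|_{B(L^2)}$ linearly — this is item (iii). For $N = 5$ and $\alpha \geq 7 - 2b$ the $|x|^{-b-1}$ weight can no longer be absorbed without using more derivative regularity on several factors of $u$ simultaneously; the fix is to distribute the $H^2$-regularity among more than one copy of $u$, which is why item (iv) has the asymmetric exponent $\|\langle\Delta\rangle u\|_{B(L^2)}^{1+\alpha-\alpha_1-\theta}$ with a smaller scattering-norm power $\alpha_1 < \alpha - \theta$. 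Concretely one interpolates: write some $u$ factors in $L^\infty_t H^2$, some in the scattering norm, and the rest in $\langle\Delta\rangle$-type $L^2$-admissible norms, and check that the Hölder sum of reciprocals matches the admissibility constraint $\frac{4}{q} + \frac{N}{r} = \frac{N}{2}$ for some admissible pair. The verification that such a choice of exponents exists exactly when $N = 5$ and $\alpha \ge 7-2b$ is the computation I would do most carefully, since it is where the dimensional restriction and the value $7 - 2b = \frac{8-2b}{N-4}\big|_{N=5} - 1$ enter.
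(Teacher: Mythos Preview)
Your plan is correct and follows essentially the same route as the paper: items (i)--(iii) are handled exactly as you describe (the paper simply cites \cite{GUZPAS,GuzPas1} for them), and for item (iv) your strategy --- split at the unit ball, apply H\"older with explicitly chosen exponents, use Sobolev embeddings on several factors, and place more than one copy of $u$ in $\langle\Delta\rangle$-weighted $L^2$-admissible norms so that the scattering-norm power drops to some $\alpha_1<\alpha-\theta$ --- is precisely what the authors do, with concrete exponents $r_1,\dots,r_5$ and an interpolation step $\|D^{s_c}u\|_{L^q_IL^r_x}\lesssim\|\langle\Delta\rangle u\|_{B(L^2)}$ at the end. One small correction to your diagnosis: in the paper's execution \emph{both} gradient terms in case (iv), not just the one with weight $|x|^{-b-1}$, are handled by the asymmetric scheme, and the threshold $\alpha=7-2b$ arises from the time-admissibility constraint $q\geq 2$ on one of the $L^2$-admissible pairs (cf.\ the footnote after the choice of $\left(\frac{4\alpha}{4-b-\eta},\frac{5\alpha}{\alpha s_c+\eta}\right)$) rather than directly from the integrability of the weight near the origin.
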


\begin{proof} For the estimates (i), (ii) and (iii), we refer the reader to  \cite[Section $4$]{GUZPAS} and \cite[Section $3$]{GuzPas1}. We prove here estimate (iv) for $7-2b \leq \alpha < 8-2b$ in dimension $N = 5$, therefore completing the whole intercritical range in this case. Indeed, note that
$$
|\nabla F(x, u)| \lesssim |x|^{-b} |\nabla (|u|^{\alpha}  u)| + |x|^{-b-1} |u|^{\alpha} |u|.
$$
We write $B$ for the unit ball centered at the origin, let $A\in\{B,B^C\}$, $0<\eta,\tilde{\theta}\ll 1$ and split (using H\"older)
\begin{equation}
    \| |x|^{-(b+1)} |u|^{\alpha}u\|_{L^{\frac{10}{7}}(A)} \leq \||x|^{-(b+1)}\|_{L^{r_1}(A)} 
    \|u\|^{\tilde{\theta}\eta}_{L^{r_2}} 
    \|u\|^{\alpha_1}_{L^{r_3}}
    \|u\|^{\alpha_2}_{L^{r_4}} \|u\|_{L^{r_5}},
\end{equation}
where

\begin{equation}
\begin{cases}
\frac{1}{r_1} = \frac{b+1+l \tilde{\theta} \eta}{5},\\
\frac{1}{r_2} = \frac{4-b}{5\alpha}-\frac{l}{5},  \\
\frac{1}{r_3} = \frac{4-b-\eta}{5\alpha},\\
\frac{1}{r_4} = \frac{\eta}{5\alpha},\\
\frac{1}{r_5} = \frac{\eta}{10},
\end{cases}, \quad
\begin{cases}
\alpha_1 = \frac{5-2b}{8-2b-4\eta}\alpha - \frac{3\alpha+\tilde{\theta}(8-2b)-2\tilde{\theta}\eta}{8-2b-4\eta} \eta,\\
\alpha_2 = \frac{3}{8-2b-4\eta}\alpha-\frac{\alpha-2\tilde{\theta}\eta}{8-2b-4\eta}\eta,
\end{cases}, \quad l=\begin{cases}
2-s_c,  &A = B,\\
-s_c, &A = B^C.
\end{cases}
\end{equation}
Observe that $0<\alpha_1<\alpha-\tilde{\theta}\eta$ in view of  $b<\frac{5}{2}$ and $\eta>0$ small. (Note that, unlike the previous works, we rely more on the Sobolev embeddings, by ``exchanging'' part of the power $\alpha$ in order to only work with admissible pairs. But as long as the remainder $\alpha_1$ is positive and we avoid the $L^\infty$ norm on time, the desired fixed-point argument can be closed.)  
Moreover, if $A=B$ then $\frac{5}{r_1}>b+1$ and if $A=B^C$, $\frac{5}{r_1}<b+1$, so we have that $|x|^{-b-1}\in L^{r_1}(A)$ in any case. Hence, applying the Sobolev inequality, it follows that\footnote{Note that $\frac{5\alpha}{\alpha s_c+\eta}<\frac{5}{s_c}$ and $\frac{10}{4+\eta}<\frac{5}{2}$ (conditions to apply the Sobolev inequality).}
\begin{equation}
\| |x|^{-(b+1)} |u|^{\alpha}u\|_{L^{\frac{10}{7}}(A)} \lesssim
\|u\|^{\tilde{\theta}\eta}_{H^2}
\|u\|^{\alpha_1}_{L^{\frac{5\alpha}{4-b-\eta}}}
\|D^{s_c} u\|^{\alpha_2}_{L^{\frac{5\alpha}{\alpha s_c+\eta}}} \|\Delta u\|_{L^{\frac{10}{4+\eta}}}.
\end{equation}
Since
$
\frac{1}{2}=\frac{\alpha_1\eta}{4\alpha}+\frac{\alpha_2(\alpha s_c+\eta)}{5\alpha}+\frac{4+\eta}{10}
$ and using again the H\"older inequality we deduce
\begin{equation}
\| |x|^{-(b+1)} |u|^{\alpha}u\|_{L^2_I L^{\frac{10}{7}}(A)} \lesssim
\|u\|^{\tilde{\theta}\eta}_{L^\infty_I H^2}
\|u\|^{\alpha_1}_{L^\frac{4\alpha}{\eta}_I L^{\frac{5\alpha}{4-b-\eta}}_x}
\|D^{s_c} u\|^{\alpha_2}_{L^{\frac{4\alpha}{4-b-\eta}}_I L^{\frac{5\alpha}{\alpha s_c+\eta}}_x} \|\Delta u\|_{L^\frac{8}{1-\eta}_I L^{\frac{10}{4+\eta}}_x}.
\end{equation}
It is easy to see that $\left(\frac{4\alpha}{\eta}, \frac{5\alpha}{4-b-\eta}\right)$ is a $B(\dot{H}^{s_c})$-admissible pair and $\left(\frac{4\alpha}{4-b-\eta},\frac{5\alpha}{\alpha s_c+\eta}\right)$, $\left({\frac{8}{1-\eta} ,{\frac{10}{4+\eta}}}\right)$ are $B$-admissible pairs\footnote{We observe that, $\alpha\geq 7-2b$ and $b<\frac{5}{2}$ implies that $\frac{4\alpha}{4-b-\eta}>2$, condition of admissible pair \eqref{L2Admissivel}.}, thus by interpolation
\begin{equation}
    \| |x|^{-(b+1)} |u|^{\alpha}u\|_{L^2_I L_x^{\frac{10}{7}}} \lesssim
    \|u\|^{\tilde{\theta}\eta}_{L^\infty_I H^2}
    \|u\|^{\alpha_1}_{B(\dot{H}^{s_c})}
    \|\langle \Delta \rangle u\|_{B(L^2)}^{1+\alpha_2}.
\end{equation}

The estimation of $ |x|^{-b} \nabla(|u|^{\alpha} u) \approx |x|^{-b} |u|^{\alpha} \nabla u$ is very similar, with the only changes being choosing $\frac{1}{r_1} =\frac{b+l \tilde{\theta} \eta}{5}$ and $\frac{1}{r_5} = \frac{\eta+2}{10}$, and so we conclude \begin{equation}
\| |x|^{-b} \nabla (|u|^{\alpha}u)\|_{L^2_I L_x^{\frac{10}{7}}} \lesssim
\|u\|^{\tilde{\theta}\eta}_{L^\infty_I H^2}
\|u\|^{\alpha_1}_{B(\dot{H}^{s_c})}
\|\langle \Delta \rangle u\|_{B(L^2)}^{1+\alpha_2}.
\end{equation}

Therefore combining the last two inequalities we obtain the desired result.
\end{proof} 

\begin{remark} We also have
\begin{equation}\label{useful estimate}
\||x|^{-b}|u|^{\alpha}u\|_{L^\infty_IL^{r}_x} \lesssim \|u\|_{L^\infty_I H^2_x}^{\alpha+1}
\end{equation}
for $\frac{2N}{N+8} < r < \frac{2N}{N+4}$.
\end{remark}
\begin{proof}
 Let $0<\eta \ll 1$. We write
 \begin{equation}
     \||x|^{-b}|u|^{\alpha}u\|_{L^{r}_x} \lesssim \| |x|^{-b} \|_{L_B^{\frac{N}{b+\eta}}} \|u\|^{\alpha+1}_{L_x^\frac{Nr(\alpha+1)}{N-r(b+\eta)}}
     +\| |x|^{-b} \|_{L_{B^c}^{\frac{N}{b-\eta}}} \|u\|^{\alpha+1}_{L_x^\frac{Nr(\alpha+1)}{N-r(b-\eta)}}.
 \end{equation}
 The conditions $\frac{2N}{N+8} < r < \frac{2N}{N+4}$ and $\frac{8-2b}{N}<\alpha < \frac{8-2b}{N-4}$ ensure 
 \begin{equation}
     2<\frac{Nr(\alpha+1)}{N-rb}<\frac{2N}{N-4},
 \end{equation}
 which, in turn, imply the embedding $H^2 \hookrightarrow L^{\frac{Nr(\alpha+1)}{N-r(b \pm \eta)}}$.
\end{proof}


We end this section with an important lemma, followed by the proof of Theorem \ref{GWPH2}. The full-range nonlinear estimates also play a key role in obtaining these results. 

\begin{lemma}[{Space-time bounds imply scattering}]\label{bound-scattering} Let $N$, $\alpha$ and $b$ be as in Lemma \ref{L:NL}. Let $u$ be a global solution to \eqref{IBNLS} satisfying $\|u\|_{L^\infty_t H^2_x}\leq E$. If 
\begin{equation*}
\|u\|_{B(\dot{H}^{s_c},[T,+\infty))} <+\infty,
\end{equation*}
for some $T>0$, then
 $u$ scatters forward in time in $H^2$.
\begin{proof} 

For $\eta>0$, let $[T,+\infty) = \displaystyle\bigcup_{j=1}^N I_j$, in which the intervals $I_j$ are chosen such that $\|u\|_{B(\dot{H}^{s_c},I_j)}<\eta$ for all $j$. If $[a,a+t]\subset I_j$, by Strichartz, and Lemma \ref{L:NL}, there exists $0<\alpha_1<\alpha$ such that

\begin{equation}
    \|\langle\Delta\rangle u\|_{S(L^2,[a,a+t])} \lesssim \|u(a)\|_{H^2} + \|u\|_{B(\dot{H}^{s_c},I_j)}^{\alpha_1} \|\langle \Delta \rangle u\|_{S(L^2,[a,a+t])}^{\alpha-\alpha_1+1} \leq  E + \eta^{\alpha_1} \|\langle \Delta \rangle u\|_{S(L^2,[a,a+t])}^{\alpha-\alpha_1+1}. 
\end{equation}

This implies, by a continuity argument, $\|\langle\Delta\rangle u\|_{S(L^2,I_j)} \lesssim E$, if $\eta$ is chosen such that $\eta \ll E^{-\frac{\alpha-\alpha_1}{\alpha_1}}$. Summing from $j = 1$ to $N$, we conclude

\begin{equation}
     \|\langle\Delta\rangle u\|_{S(B^2,[T,+\infty))} < +\infty.
\end{equation}

Now, define
$$
\phi^+=e^{-iT\Delta^2}u(T)+i\int\limits_{T}^{+\infty}e^{-is\Delta^2}\left(|x|^{-b}|u|^\alpha u\right)(s)ds.
$$
We see that $\phi^+ \in H^2$, since combining Strichartz estimates and Lemma \ref{L:NL} gives

\begin{equation}
\|\phi^+\|_{H^2} \lesssim \|u\|_{L^\infty_{[T,+\infty)}H^2_x} + \| u \|^{\alpha_1}_{B(\dot{H}^{s_c};[T,\infty))}\|\langle \Delta \rangle u\|_{B(L^2,[T,\infty))}^{\alpha-\alpha_1-1} < +\infty.
\end{equation}

A simple inspection shows
$$
 u(t)-e^{it\Delta^2}\phi^+=i\int\limits_{t}^{+\infty}e^{i(t-s)\Delta^2}|x|^{-b}(|u|^\alpha u)(s)ds,
$$
thus again by Strichartz and Lemma \ref{L:NL}, it follows that 
$$
 \|u(t)-e^{it\Delta^2}\phi^+\|_{H^2}\lesssim \| u \|^{\alpha_1}_{B(\dot{H}^{s_c};[t,\infty))}
 \|\langle \Delta \rangle u\|_{B(L^2,[t,\infty))}^{\alpha-\alpha_1+1}.
$$
Since $ \| u \|_{B(\dot{H}^{s_c};[T,\infty))}<+\infty$, we conclude that 
\begin{equation*}
\|u(t)-e^{it\Delta^2}\phi^+\|_{H^2}\rightarrow 0, \,\,\textnormal{as}\,\,t\rightarrow +\infty.
\end{equation*}

\end{proof}
\end{lemma}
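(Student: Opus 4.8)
The plan is to run the standard two-step scattering argument, exactly parallel to the computation sketched after the statement: first promote the finite scattering-size norm $\|u\|_{B(\dot{H}^{s_c},[T,\infty))}$ to a finite full Strichartz norm at the $H^2$ level, and then use the Duhamel representation to construct the asymptotic state and prove convergence. Throughout, the $a$ priori bound $\|u\|_{L^\infty_tH^2_x}\le E$ is used only to absorb the harmless $L^\infty_tH^2_x$ factors $\|u\|^\theta_{L^\infty_tH^2_x}$ appearing in Lemma \ref{L:NL}.

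\emph{Step 1 (promotion to $H^2$ Strichartz bounds).} Since $\|u\|_{B(\dot{H}^{s_c},[T,\infty))}<\infty$ and the finitely many constituent $L^q_tL^r_x$ norms actually used in Lemma \ref{L:NL}(ii)--(iv) are absolutely continuous in time, I would partition $[T,+\infty)=\bigcup_{j=1}^{M}I_j$ into finitely many intervals on each of which $\|u\|_{B(\dot{H}^{s_c},I_j)}<\eta$, for a small $\eta$ fixed below. On a subinterval $[a,a+t]\subset I_j$, applying the Strichartz estimates \eqref{SE1}--\eqref{SE3}, the local-in-time estimate \eqref{SE4}, and the nonlinear bounds of Lemma \ref{L:NL} to the Duhamel formula for $u$ yields a closed inequality of the form
\[
X\;\lesssim\;E+\eta^{\alpha_1}X^{\alpha-\alpha_1+1},\qquad X:=\|\langle\Delta\rangle u\|_{B(L^2,[a,a+t])}.
\]
Because the exponent $\alpha-\alpha_1+1>1$ and $\eta$ is small, a continuity/bootstrap argument gives $X\lesssim E$ uniformly in the length $t$, hence $\|\langle\Delta\rangle u\|_{B(L^2,I_j)}\lesssim E$ on each piece; summing over the finitely many $j$ produces $\|\langle\Delta\rangle u\|_{B(L^2,[T,\infty))}<\infty$.

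\emph{Step 2 (asymptotic state and convergence).} Define $\phi^+=e^{-iT\Delta^2}u(T)+i\int_T^{\infty}e^{-is\Delta^2}(|x|^{-b}|u|^\alpha u)(s)\,ds$. To see $\phi^+\in H^2$, I would apply $\langle\Delta\rangle$, estimate the $L^2$-part of the nonlinearity by Lemma \ref{L:NL}(ii) and the $\Delta$-part by (iii)/(iv) through \eqref{SE3}, both finite by Step 1. The difference $u(t)-e^{it\Delta^2}\phi^+=i\int_t^\infty e^{i(t-s)\Delta^2}(|x|^{-b}|u|^\alpha u)(s)\,ds$ is then controlled in $H^2$ by the same nonlinear norms restricted to $[t,\infty)$, which carry a factor $\|u\|^{\alpha_1}_{B(\dot{H}^{s_c},[t,\infty))}$. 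Since this tail tends to $0$ as $t\to\infty$ (being the tail of a finite norm) while the companion $\|\langle\Delta\rangle u\|_{B(L^2,[t,\infty))}$ stays bounded, we conclude $\|u(t)-e^{it\Delta^2}\phi^+\|_{H^2}\to0$.

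The main obstacle is Step 1: closing the superlinear bootstrap \emph{uniformly} in the subinterval length, while checking that the threshold $\eta$ (hence the number $M$ of intervals) depends only on $E$ and not on the particular solution. The delicate case is the borderline five-dimensional regime $\alpha\ge 7-2b$, where one must use estimate (iv) of Lemma \ref{L:NL}, which trades part of the power $\alpha$ into admissible-pair norms (rather than the $L^\infty_t H^2_x$ norm) precisely so that the prefactor multiplying the high power of $\|\langle\Delta\rangle u\|_{B(L^2)}$ is a genuinely small scattering-size quantity; verifying that this still produces an inequality of the displayed form, and that the admissible pairs appearing there justify the small-norm partition, is the crux of the argument.
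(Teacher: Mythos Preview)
Your proposal is correct and follows essentially the same approach as the paper: a finite partition of $[T,\infty)$ into pieces with small scattering-size norm, a superlinear bootstrap via Strichartz and Lemma~\ref{L:NL} to promote to a finite $\langle\Delta\rangle$-Strichartz norm, and then the standard Duhamel construction of $\phi^+$ together with the tail argument for convergence. One small remark: the threshold $\eta$ indeed depends only on $E$, but the number $M$ of subintervals also depends on the (finite) value of $\|u\|_{B(\dot H^{s_c},[T,\infty))}$; this is harmless since only finiteness of $M$ is needed.
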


\begin{proof}[\bf Proof of Theorem \ref{GWPH2}] 
We only prove the case $N=5$, $7-2b \leq \alpha < 8-2b$, since the remaining cases can have the same treatment as in [7,
Theorem 1.6]. Given $E >0$ and $v_0 \in H^2$ such that $\|v_0\|_{H^2} \leq E$, define the metric space 
\begin{equation}
F = \left\{v \,|\, 
\|v\|_{B(\dot{H}^{s_c})}  \leq 2 \|e^{ it \Delta^2} v_0\|_{B(\dot{H}^{s_c})}, \, \|v\|_{S(L^2)}+\|\Delta v\|_{S(L^2)} \leq 2c \|v_0\|_{H^2}  \right\},
\end{equation}
where $c$ is the constant given by Strichartz estimates. Equip $F$ with the distance
\begin{equation}
    d(u,v) = \|u-v\|_{S(\dot{H}^{s_c})}.
\end{equation}

By completeness of $L^p$ spaces, reflexiveness and uniqueness of weak and strong limits, $F$ is a complete space. Now, define the map
\begin{equation}
G(v) = e^{it\Delta^2}v_0 + i \int_0^t e^{i(t-s)\Delta^2}|x|^{-b}|v|^{\alpha} v(s) \, ds.
\end{equation}
We want to show that $G$ maps $F$ in $F$ and it is a contraction. Indeed, combining the Strichartz estimates together with Lemma \ref{L:NL}, one has

\begin{align}
\|G(v)\|_{S(\dot{H}^{s_c})} &\leq \|e^{ it \Delta^2} v_0\|_{B(\dot{H}^{s_c})} + cE^\theta \|v\|^{\alpha-\theta+1}_{S(\dot{H}^{s_c})}\\ &\leq [1+cE^\theta(2\delta)^{\alpha-\theta}  ]\|e^{ it \Delta^2} v_0\|_{B(\dot{H}^{s_c})},\\
\|\langle \Delta \rangle G(v)\|_{S(L^2)} &\leq c\|v_0\|_{H^2} + cE^\theta \|v\|^{\alpha_1}_{S(\dot{H}^{s_c})} 
\|\langle \Delta \rangle v\|_{S(L^2)}^{1+\alpha-\alpha_1-\theta}
\\
&\leq c[1+ c(2c)^{\alpha-\alpha_1-\theta}E^{\alpha-\alpha_1}(2\delta)^{\alpha_1}]  \|v_0\|_{H^2}\end{align}
and
\begin{align}
\|G(u) - G(v)\|_{S(\dot{H}^{s_c})} &\leq c E^{\theta} \left(\|u\|^{\alpha-\theta}_{S(\dot{H}^{s_c})} + \|v\|^{\alpha-\theta}_{S(\dot{H}^{s_c})}\right) \|u-v\|_{S(\dot{H}^{s_c})} \\
&\leq 2c E^{\theta} (2\delta)^{\alpha-\theta} \|u-v\|_{S(\dot{H}^{s_c})}.  
\end{align}
Therefore, by choosing a small $\delta$ (depending only on $E$), the theorem is proved.
\end{proof}

\ 

\section{Scattering criterion}

In this section is devoted to show Proposition\ref{scattering_criterion}. We start with the following lemma that will be used in the proof. 
\begin{lemma}\label{linevo}
Let  $N \geq 5$, $0 < b < \min\{\frac{N}{2},4\}$, $\frac{8-2b}{N} < \alpha < \frac{8-2b}{N-4}$ and $u$ be a (possibly non-radial) $H^2$-solution to \eqref{IBNLS} satisfying \eqref{E}. If $u$ satisfies \eqref{scacri} for some $0 < \epsilon < 1$, then there exist $\gamma, T > 0$ such that 
\begin{equation}\label{norm-small}
\left\|e^{i(\cdot-T)\Delta}u(T)\right\|_{B\left(\dot{H}^{s_c}, [T, +\infty)\right)}  \lesssim\epsilon^\gamma.
\end{equation}
\end{lemma}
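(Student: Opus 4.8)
The plan is to write $u(T) = e^{iT\Delta^2}u_0 + i\int_0^T e^{i(T-s)\Delta^2}(|x|^{-b}|u|^\alpha u)(s)\,ds$ using Duhamel's formula, so that
\[
e^{i(\cdot-T)\Delta^2}u(T) = e^{i\cdot\,\Delta^2}u_0 + i\int_0^T e^{i(\cdot-s)\Delta^2}(|x|^{-b}|u|^\alpha u)(s)\,ds.
\]
The first term is controlled by the linear Strichartz estimate \eqref{SE1} and the dominated convergence / decay of the free evolution: one shows that $\|e^{it\Delta^2}u_0\|_{B(\dot H^{s_c},[T,+\infty))}\to 0$ as $T\to\infty$, hence it can be made smaller than any prescribed power of $\epsilon$ by taking $T$ large (here one only uses $u_0\in H^2\hookrightarrow\dot H^{s_c}$ together with density of nice functions and the fact that the full Strichartz norm over $\mathbb R$ is finite). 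So the real content is to bound the Duhamel term
\[
\mathcal I_T := \int_0^T e^{i(\cdot-s)\Delta^2}(|x|^{-b}|u|^\alpha u)(s)\,ds
\]
in $B(\dot H^{s_c},[T,+\infty))$ by a positive power of $\epsilon$, using the smallness hypothesis \eqref{scacri}.

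To exploit \eqref{scacri}, I would first use the hypothesis to select a time $T$ (large) at which $\int_{B(0,R)}|u(x,T)|^2\,dx\le 2\epsilon^2$ for a suitable radius $R=R(\epsilon)$ and, crucially, also control the mass of $u(T)$ \emph{outside} a ball of some much larger radius $\rho$: since $\sup_t\|u(t)\|_{H^2}\le E$, a standard truncated-virial / mass-evolution argument (differentiating $\int \chi_\rho(x)|u(t,x)|^2\,dx$ against a smooth cutoff and integrating over a short time window near $T$, using that the flux term is $O(E^2\rho^{-\text{something}})$ and is further helped by the extra decay of $|x|^{-b}$ in the nonlinear contribution) shows the mass of $u$ on $\{|x|\ge \rho\}$ is also small, of size $\lesssim \epsilon^{c}$, at $t=T$, for $\rho$ chosen as a large power of $R$. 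Combining these two facts with the $H^2$ bound and interpolation/Gagliardo–Nirenberg yields smallness of $u(T)$ in $\dot H^{s_c}$ — or more precisely, smallness of the part of $u(T)$ that matters — so that the linear evolution $e^{i(t-T)\Delta^2}u(T)$ inherits a small $B(\dot H^{s_c})$ norm on $[T,+\infty)$. An alternative and arguably cleaner route, matching the Murphy/Dodson–Murphy strategy alluded to in the introduction, is to split the Duhamel integral $\int_0^T = \int_0^{T-\tau} + \int_{T-\tau}^{T}$: on the far piece $\int_0^{T-\tau}$ one uses the dispersive decay $\|e^{it\Delta^2}\|_{L^{r'}\to L^r}\lesssim |t|^{-\frac N4(\frac1{r'}-\frac1r)}$ (valid for $N\ge5$, and here the relevant exponent is integrable in $s$ over $[0,T-\tau]$ uniformly in $t\ge T$), bounding $\||x|^{-b}|u|^\alpha u\|_{L^{r'}_x}\lesssim \|u\|_{H^2}^{\alpha+1}$ via the remark estimate \eqref{useful estimate}, to get a bound $\lesssim E^{\alpha+1}\tau^{-\kappa}$ for some $\kappa>0$; on the near piece $\int_{T-\tau}^T$ (of length $\tau$) one combines the Strichartz estimate \eqref{SE4} with the nonlinear estimate Lemma \ref{L:NL}(i) and the local smallness \eqref{scacri}, turning the $L^2$-ball smallness into smallness of $\|u\|_{B(\dot H^{s_c},[T-\tau,T])}$ (again via interpolation between the $H^2$ bound and the $L^2_x(B(0,R))$ bound, paying a loss on the exterior that is absorbed by choosing $R$ large), yielding a bound $\lesssim \epsilon^{c}$. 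Optimizing the split point $\tau$ as a power of $\epsilon$ then gives the claimed $\lesssim \epsilon^\gamma$.

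The main obstacle I anticipate is making the passage from the single-ball mass bound \eqref{scacri} to a genuine smallness of a Strichartz-type quantity on $[T,+\infty)$ — i.e., controlling the near-in-time Duhamel piece — because $L^2$ smallness on a fixed ball does not by itself control $\dot H^{s_c}$ or any mixed space-time norm; one has to trade regularity (using $\sup_t\|u\|_{H^2}\le E$) against the localized low-regularity smallness, and simultaneously control the contribution from $|x|\ge R$, where \eqref{scacri} says nothing directly. This is exactly where the inhomogeneity helps: on $\{|x|\ge R\}$ the weight $|x|^{-b}\le R^{-b}$ is itself small, so the exterior nonlinear contribution is damped by $R^{-b}$ and can be absorbed by choosing $R=R(\epsilon)$ appropriately large; quantitatively tracking this trade-off (and ensuring all the interpolation exponents stay admissible, using Lemma \ref{L:NL}) is the technical heart of the argument, after which everything else is a routine application of Strichartz estimates and the dispersive decay of $e^{it\Delta^2}$ for $N\ge5$.
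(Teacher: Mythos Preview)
Your Route 2 is essentially the paper's argument: Duhamel, free piece small on $[T_0,\infty)$ by Strichartz, split the inhomogeneous term at $T-\tau$ with $\tau=\epsilon^{-\mu}$, dispersive decay plus \eqref{useful estimate} on the far piece, and the inhomogeneity $|x|^{-b}\le R^{-b}$ to kill the exterior contribution on the near piece. Two corrections are needed, however.

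First, Route 1 contains a genuine error: you cannot deduce that the mass of $u(T)$ on $\{|x|\ge\rho\}$ is small. The truncated mass evolution only says $\int\chi_\rho|u|^2$ changes slowly in time; it says nothing about its size. In fact, by mass conservation, if $\int_{B(0,R)}|u(T)|^2\le 2\epsilon^2$ then the mass outside $B(0,R)$ is essentially all of $M[u_0]$, which is not small. So this route cannot yield smallness of $u(T)$ in $\dot H^{s_c}$, and you should abandon it.

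Second, in Route 2 your stated mechanism for the near piece is not quite right. You propose to make $\|u\|_{B(\dot H^{s_c},[T-\tau,T])}$ itself small and then invoke Lemma~\ref{L:NL}(i). But that norm carries no weight: on $\{|x|\ge R\}$ there is no reason for $\|(1-\eta_R)u\|_{L^r_x}$ to be small, and interpolation against $H^2$ only gives $O(E)$. What the paper does --- and what your final paragraph actually describes --- is to estimate the dual norm $\||x|^{-b}|u|^\alpha u\|_{L^{\tilde a'}_{I_1}L^{r'}_x}$ directly: inside $B(0,R)$ one interpolates $\|\eta_R u\|_{L^r_x}\lesssim\|\eta_R u\|_{L^2}^{\hat\theta}\|u\|_{L^{p^*}}^{1-\hat\theta}\lesssim\epsilon^{\hat\theta}$ (after first propagating the single-time bound \eqref{scacri} to all of $I_1$ via the local mass evolution, which costs $\epsilon^{-\mu}/R$ and forces $R>\epsilon^{-(\mu+2)}$), while outside one uses $\||x|^{-b}\|_{L^{r_1}(\{|x|>R/2\})}\lesssim R^{-(br_1-N)}$. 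This gives a \emph{pointwise-in-time} bound $\lesssim\epsilon^{\hat\theta}$, and the time integration then contributes a factor $|I_1|^{1/\tilde a'}=\epsilon^{-\mu/\tilde a'}$; thus the near piece is $\lesssim\epsilon^{\hat\theta-\mu/\tilde a'}$, not a pure $\epsilon^c$, and the optimization over $\tau$ (equivalently $\mu$) must balance this growing factor against the $\epsilon^{\mu\delta}$ from the far piece.
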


\begin{proof} 
Fix the parameters $\mu, \gamma >0$ (to be chosen later). Applying the Strichartz estimate \eqref{SE1}, there exists $T_{0} > \epsilon^{-\mu}$ such that
\begin{equation}\label{T0}
\left\|e^{it\Delta}u_0\right\|_{B\left(\dot{H}^{s_c}, [T_0,+\infty) \right)} \leq \epsilon^{\gamma}.
\end{equation}

For $T\geq T_0$ to be chosen later, define  $I_1 :=\left[T-\epsilon^{-\mu}, T\right]$, $I_2 := [0, T-\epsilon^{-\mu}]$  and let $\eta$ denote a smooth, spherically symmetric function which equals $1$ on $B(0, 1/2)$ and $0$ outside $B(0,1)$. For any $R > 0$ use $\eta_R$ to denote the rescaling $\eta_R(x) := \eta(x/R)$. 

Duhamel's formula implies that
\begin{equation}
e^{i(t-T)\Delta^2}u(T)  = e^{it\Delta^2}u_0 - iF_1 - iF_2,
\end{equation}
where, for $i = 1,2,$
$$
F_i = \int_{I_i} e^{i(t-s)\Delta}|x|^{-b}|u|^{\alpha}u(s) \,ds.
$$
We refer to $F_1$ as the ``recent past", and to $F_2$ as the ``distant past". By \eqref{T0}, it remains to estimate $F_1$ and $F_2$.


We start with $F_1$. By hypothesis \eqref{scacri}, we can fix $T\geq T_0$ such that
\begin{equation}\label{mass}
\int \eta_R(x)\left|u(T,x)\right|^2dx\lesssim \epsilon^2.
\end{equation}
Given the relation  (obtained by multiplying \eqref{IBNLS} by $\eta_R\bar{u}$ , taking the imaginary part and integrating by parts)

$$
\partial_t\int \eta_R|u|^2\, dx = 2\Im\left(\int\Delta\eta_R \Delta u \bar{u}+\int \nabla \eta_R \cdot \nabla \bar{u} \Delta u\right),
$$
we have, from \eqref{E}, for all times,
$$
\left| \partial_t \int \eta_R(x)|u(t,x)|^2dx\right| \lesssim \frac{1}{R},
$$

so that,  by \eqref{mass}, for $t \in I_1$,
\begin{equation}
    \int \eta_R(x)\left|u(t,x)\right|^2dx\lesssim \epsilon^2+\frac{\epsilon^{-\mu}}{R}.
\end{equation}

If $R > \epsilon^{-(\mu+2)}$, then we have $\left\| \eta_Ru\right\|_{L^\infty_{I_1}L^2_x} \lesssim 
\epsilon
$.


Now, we use the pair $(\tilde{a},r) \in \mathcal{B}_{-s_c}$ used in \cite[Lemma $4.2$]{GUZPAS} given by
\begin{align}
\widetilde{a} = \frac{8\alpha(\alpha+2-\theta)}{\alpha[N\alpha+2b]-\theta[N\alpha-8+2b]},\,\,
r = \frac{N(\alpha (\alpha+2-\theta)}{\alpha(N-b)-\theta(4-b)}.
\end{align}
By using the H\"older and Sobolev inequalities, for $t \in I_1$, we deduce that\footnote{See the proof Lemma $4.2$ in \cite{GUZPAS}, for more details.}

\begin{equation}\label{recent_past_ball1}
||\, \eta_R |x|^{-b}|u|^{\alpha}u(t) ||_{L_x^{r'}} \lesssim  \|u(t)\|^\theta_{H^2_x} \|u(t)\|^{\alpha-\theta}_{L_x^{r}}\|\eta_R u(t)\|_{L_x^{r}} \lesssim \|\eta_R u(t)\|_{L_x^{r}}.
\end{equation}
Letting $\hat{\theta}$ be the solution of $\frac{1}{r} = \frac{\hat{\theta}}{2}+\frac{1-\hat{\theta}}{p^*}$, we have,
\begin{equation}\label{recent_past_ball2}
\|\eta_R u(t)\|_{L_x^{r}} \leq \|u(t)\|^{1-\hat{\theta}}_{L^{p^*}_x}\|\eta_R u(t)\|^{\hat{\theta}}_{L^2_x} \lesssim \epsilon^{\hat{\theta}},
\end{equation}
uniformly on time in $I_1$. We now exploit the decay of the nonlinearity, instead of assuming radiality\footnote{It is one of the crucial estimates which allow us to drop the radiality assumption.}, to estimate, by H\"older and Sobolev, for $R(\epsilon)>0$ large enough and $t \in I_1$,
\begin{align}\label{recent_past_out}
||\, (1-\eta_R) |x|^{-b}|u|^{\alpha}u(t) ||_{L_x^{r'}} &\leq ||\,|x|^{-b}|u|^{\alpha}u(t) ||_{L_{\{|x|>R/2\}}^{r'}} \nonumber\\&\leq \|\, |x|^{-b}\|_{L_{\{|x|>R/2\}}^{r_1}} \|u(t)\|^\theta_{L^{\theta r_2}_x} \|u(t)\|^{\alpha+1-\theta}_{L_x^{r}}\nonumber\\
& \lesssim \frac{1}{R^{br_1-N}} \|u(t)\|^{\alpha+1}_{H^2_x} \lesssim \epsilon^{\hat{\theta}},
\end{align}
where $r_1$ and $r_2$ are such that $br_1 > N$, $\theta r_2 \in (2,N\alpha/(4-b))$ and
\begin{equation}
\frac{1}{r'} = \frac{1}{r_1} + \frac{1}{r_2} + \frac{\alpha+1-\theta}{r}.
\end{equation}

%
%

Combining the Strichartz estimate \eqref{SE4}, together with estimates \eqref{recent_past_ball1}, \eqref{recent_past_ball2} and \eqref{recent_past_out}, one has

\begin{align}
\left\| \int_{I_1} e^{i(t-s)\Delta}|x|^{-b}|u|^{\alpha}u(s) \,ds\right\|_{B(\dot{H}^{s_c},[T,+\infty))} &\lesssim ||\, |x|^{-b}|u|^{\alpha}u ||_{B'(\dot{H}^{-s_c},I_1)}\\
&\hspace{-5cm}\leq ||\, \eta_R |x|^{-b}|u|^{\alpha}u ||_{L^{\widetilde{a}'}_{I_1}L_x^{r'}} + ||\, (1-\eta_R) |x|^{-b}|u|^\alpha u ||_{L^{\widetilde{a}'}_{I_1}L_x^{r'}}\\
&\hspace{-5cm}\lesssim|I_1|^{1/\widetilde{a}'}\epsilon^{\hat{\theta}} = \epsilon^{\hat{\theta}-\mu/\widetilde{a}'}= \epsilon^{\hat{\theta}/2},
\end{align}
where we choose $\mu := \widetilde{a}'\hat{\theta}/{2}$.
 
%
%


We now estimate $F_2$. 
Let  $(a, r) \in \mathcal{B}_{s_c}$ and define
\begin{equation}
\frac{1}{c} = \left(\frac{1}{2-s_c}\right)\left[\frac{2}{a}-\delta s_c\right]
\end{equation}
and
\begin{equation}
\frac{1}{d} = \left(\frac{1}{2-s_c}\right)\left[\frac{2}{r}-s_c\left(\frac{N-4-8\delta}{2N}\right)\right],
\end{equation}
where $\delta>0$ is small. It is easy to see that the pair $(c,d)$ is $B$-admissible\footnote{Since $a>\frac{4}{2-s_c}$ we have that $c>2$, which implies $d<\frac{2N}{N-4}$, that is, the pair $(c,d)$ satisfies \eqref{L2Admissivel}.}. By interpolation,

$$\left\|F_2\right\|_{L_{[T,+\infty)}^{a}L_x^{r}}  
\leq 
\left\|F_2\right\|^{\frac{2-s_c}{2}}_{L_{[T,+\infty)}^{c}L_x^{d}}
\left\|F_2\right\|^{\frac{s_c}{2}}_{L_{[T,+\infty)}^{\frac{1}{\delta}}L_x^{\frac{2N}{N-4-8\delta}}}.
$$ 
we can rewrite $F_2$ by (applying Duhamel's principle)
$$
F_2 = e^{it\Delta}\left[e^{i(-T+\epsilon^{-\mu})\Delta}u(T-\epsilon^{-\mu})-u(0)\right].
$$
The Strichartz estimate \eqref{SE1}, with $s=0$, leads to
\begin{align}
\left\| F_2\right\|_{L_{[T,+\infty)}^{a}L_x^{r}}
&\leq\left\| e^{it\Delta}\left[e^{i(-T+\epsilon^{-\mu})\Delta}u(T-\epsilon^{-\mu})-u(0)\right]\right\|^{\frac{2-s_c}{2}}_{L_{[T,+\infty)}^{c}L_x^{d}}
\left\|F_2\right\|^{\frac{s_c}{2}}_{L_{[T,+\infty)}^{\frac{1}{\delta}}L_x^{\frac{2N}{N-4-8\delta}}}\\
&\lesssim\left(\left\|u\right\|_{L^\infty_t L^2_x}\right)^{\frac{2-s_c}{2}} \left\|F_2\right\|^{\frac{s_c}{2}}_{L_{[T,+\infty)}^{\frac{1}{\delta}}L_x^{\frac{2N}{N-4-8\delta}}}
\lesssim \epsilon^{\frac{\mu \delta s_c}{2}}.
\end{align}
The estimate \eqref{useful estimate} and the free Schr\"odinger operator decay
$$\|e^{it\Delta^2} \cdot \|_{L^r} \lesssim \frac{1}{ t^{\frac{N}{4}(1-\frac{2}{r})}}\| \cdot\|_{L^{r'}},\;\; \forall r \geq 2,
$$
yield
\begin{align}
\left\|F_2\right\|_{L_{[T,+\infty)}^{\frac{1}{\delta}}L_x^{\frac{2N}{N-4-8\delta}}}
&\lesssim \left\|\int_{I_2} |\cdot-s|^{-(1+2\delta)}\left\| |x|^{-b} |u|^{\alpha}u(s) \right\|_{L^{\frac{2N}{N+4+8\delta}}_x}\, ds\right\|_{L_{[T,+\infty)}^{\frac{1}{\delta}}}\\
&\lesssim \|u\|_{L_t^\infty H_x^2}^{\alpha+1}\left\|\left(\cdot -T+\epsilon^{-\mu}\right)^{-2\delta}\right\|_{L_{[T,+\infty)}^{\frac{1}{\delta}}}\\
&\lesssim \epsilon^{\mu\delta}.
\end{align}

Finally, defining $\gamma := \min\{ \frac{\hat{\theta}}{2}, \frac{\mu \delta s_c}{2}\}$ and recalling that

$$
e^{i(t-T)\Delta}u(T) = e^{it\Delta}u_0 + iF_1 + iF_2,
$$
we obtain \eqref{norm-small}. 
\end{proof}
\begin{proof}[\bf Proof of Proposition \ref{scattering_criterion}]

Choose $\epsilon$ is small enough so that, by Lemma \ref{linevo}, $$\left\|e^{i(\cdot)\Delta}u(T)\right\|_{S\left(\dot{H}^{s_c}, [0, +\infty)\right)}  = \left\|e^{i(\cdot-T)\Delta} u(T)\right\|_{S\left(\dot{H}^{s_c}, [T, +\infty)\right)} \leq c\epsilon^\gamma\leq \delta_{sd},
$$
which implies that the norm $
\|u\|_{B(\dot{H}^{s_c},[0,+\infty))}$ is bounded, by Theorem \ref{GWP}. Thus, using Lemma \ref{bound-scattering} we conclude that $u$ scatters forward in time in $H^2$.
\end{proof}


\

\section{Scattering: Proof of Theorem \ref{Scattering}}

To prove scattering, we first obtain some local coercivity results. Then we prove a Virial-Morawetz-type estimate to gain control over a suitable norm on large balls. The proof is concluded by using the scattering criterion. We remark here that we do not make any radiality assumption, and instead take advantage of the decay of the nonlinearity.

\subsection{Coercivity}
We recall here the so-called coercivity (also known as \textit{energy-trapping}) results for the IBNLS, which were proved in \cite{SaanouniRadial}.

\begin{lemma}\label{lem_coerc_1}
Let $N$, $\alpha$ and $b$ as in Theorem \ref{Scattering}, and $f \in H^2(\mathbb{R}^N)$. Assume that, for some $\delta_0 > 0$,
\begin{equation}
M(f)^{\frac{2-s_c}{s_c}} E(f) \leq (1-\delta_0) M(Q)^{\frac{2-s_c}{s_c}}E(Q), 
\end{equation}
and
\begin{equation}
    \|f \|^{\frac{2-s_c}{s_c}}_{L^2}\|\Delta f \|_{L^2}  \leq \|Q\|^{\frac{2-s_c}{s_c}}_{L^2} \|\Delta Q\|_{L^2}.
    \end{equation}
    Then there exists $\delta = \delta(\delta_0, N, p, Q)$ such that
    \begin{equation}
        \|f \|^{\frac{2-s_c}{s_c}}_{L^2}\|\Delta f \|_{L^2}  \leq (1-\delta) \|Q\|^{\frac{2-s_c}{s_c}}_{L^2} \|\Delta Q\|_{L^2}.
    \end{equation}

\end{lemma}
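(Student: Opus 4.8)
The plan is to establish the coercivity (energy-trapping) statement of Lemma \ref{lem_coerc_1} by a variational/continuity argument built on the sharp Gagliardo--Nirenberg-type inequality associated with the IBNLS, exactly as in the analogous results for the classical NLS (Holmer--Roudenko type arguments) and as carried out in \cite{SaanouniRadial}. First I would recall the sharp constant: the ground state $Q$ solving $\Delta^2 Q - Q - |x|^{-b}Q^{\alpha+1}=0$ is an optimizer of
\begin{equation*}
\||x|^{-b/(\alpha+2)}f\|_{L^{\alpha+2}}^{\alpha+2} \leq C_{GN}\,\|\Delta f\|_{L^2}^{\frac{N\alpha+2b}{4}}\,\|f\|_{L^2}^{\alpha+2-\frac{N\alpha+2b}{4}},
\end{equation*}
and the Pohozaev identities for $Q$ give $C_{GN}$ explicitly in terms of $\|\Delta Q\|_{L^2}$ and $\|Q\|_{L^2}$; substituting this into the definition \eqref{energy} of the energy yields a lower bound for $E(f)$ purely in terms of the scaling-invariant quantity $x:=\|\Delta f\|_{L^2}^{s_c}\|f\|_{L^2}^{2-s_c}$ (equivalently $\|f\|_{L^2}^{(2-s_c)/s_c}\|\Delta f\|_{L^2}$), namely $M(f)^{(2-s_c)/s_c}E(f) \geq g(x)$ for an explicit function $g$ that vanishes at $0$, is strictly increasing then strictly decreasing, and attains its maximum $M(Q)^{(2-s_c)/s_c}E(Q)$ precisely at $x=x_Q:=\|Q\|_{L^2}^{(2-s_c)/s_c}\|\Delta Q\|_{L^2}$.

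Next I would run the standard dichotomy argument. The hypothesis $M(f)^{(2-s_c)/s_c}E(f)\leq (1-\delta_0)M(Q)^{(2-s_c)/s_c}E(Q) = (1-\delta_0)g(x_Q)$ forces $g(x)\leq (1-\delta_0)g(x_Q)$, which (since $g$ is continuous, positive on $(0,x_Q]$, increasing up to $x_Q$) confines $x$ to lie either in an interval $[0,x_-]$ with $x_-<x_Q$ or in a set $[x_+,\infty)$ with $x_+>x_Q$, where $x_\pm$ depend only on $\delta_0$ and $Q$ (and $N,\alpha,b$). The second hypothesis, $x\leq x_Q$, rules out the far component $[x_+,\infty)$ since $x_+>x_Q$; hence $x\leq x_-<x_Q$. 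Setting $\delta:=1-x_-/x_Q>0$ gives exactly the desired conclusion $\|f\|_{L^2}^{(2-s_c)/s_c}\|\Delta f\|_{L^2}\leq (1-\delta)\|Q\|_{L^2}^{(2-s_c)/s_c}\|\Delta Q\|_{L^2}$. (Strictly speaking one does not even need continuity in $f$, only the scalar inequality $g(x)\leq(1-\delta_0)g(x_Q)$ together with $x\leq x_Q$; a continuity-in-$t$ version is what one uses later for the flow, but for a fixed $f$ the argument is purely about the real variable $x$.)

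The main obstacle is not the continuity bootstrap but verifying the sharp Gagliardo--Nirenberg inequality and the exact value of its optimal constant in the inhomogeneous biharmonic setting: one must know that a ground state $Q$ exists (minimizer of the associated Weinstein functional, or least-energy solution), that it satisfies the two Pohozaev/virial identities needed to pin down $C_{GN} = \frac{\alpha+2}{2}\big(\tfrac{4}{N\alpha+2b}\big)^{\frac{N\alpha+2b}{8}}\big(\tfrac{N\alpha+2b-8}{8}\big)^{-\frac{N\alpha+2b-8}{8}}\|Q\|_{L^2}^{-\alpha}\|\Delta Q\|_{L^2}^{-2+\frac{8-N\alpha-2b}{4}}$ (up to the usual algebra), and that the intercritical condition $\tfrac{8-2b}{N}<\alpha<\tfrac{8-2b}{N-4}$ makes the exponent $\tfrac{N\alpha+2b}{4}$ strictly between $2$ and $\tfrac{2N}{N-4}\cdot\tfrac{?}{?}$ so that $g$ genuinely has the one-bump shape. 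All of this is exactly the content of the coercivity lemmas already proved in \cite{SaanouniRadial} (and the prerequisite variational facts are recalled there and in \cite{carguzpas2020}), so for the proof I would simply invoke those references for the sharp constant and the shape of $g$, and then present the two-line real-variable dichotomy above. Since the statement is explicitly attributed to \cite{SaanouniRadial}, the cleanest exposition is: recall the sharp inequality and the function $g$ from \cite{SaanouniRadial} (or \cite{carguzpas2020}), observe that the two hypotheses translate to $g(x)\le(1-\delta_0)\max g$ and $x\le x_Q$, and conclude $x\le(1-\delta)x_Q$ with $\delta$ extracted from the strict monotonicity of $g$ on $[0,x_Q]$.
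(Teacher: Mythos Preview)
Your proposal is correct and follows precisely the standard Holmer--Roudenko energy-trapping argument via the sharp Gagliardo--Nirenberg inequality and the one-bump shape of the associated scalar function. Note, however, that the paper itself does not give a proof of this lemma at all: it simply recalls the statement and attributes it to \cite{SaanouniRadial}, so your write-up is in fact more detailed than what appears in the paper, and matches the argument carried out in that reference.
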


\begin{lemma}\label{lem_coerc_2}
Under the conditions of the previous lemma, one also has, for some $\eta > 0$,
\begin{equation}
    \int  \left[|\Delta f|^2
    -  \frac{N\alpha+2b}{4(\alpha+2)}|x|^{-b}|f|^{\alpha+2} \right]\, dx \geq \eta  \int |x|^{-b}|f|^{\alpha+2}\, dx.
\end{equation}
\end{lemma}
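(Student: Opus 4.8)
\textbf{Proof plan for Lemma \ref{lem_coerc_2}.}

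The plan is to derive this coercive lower bound from the (strict) subcriticality of the quantity $\|f\|_{L^2}^{\frac{2-s_c}{s_c}}\|\Delta f\|_{L^2}$ established in Lemma \ref{lem_coerc_1}, combined with the sharp Gagliardo--Nirenberg-type inequality adapted to the inhomogeneous biharmonic nonlinearity. Recall that the optimizer $Q$ of
$$
\int |x|^{-b}|f|^{\alpha+2}\,dx \leq C_{GN}\,\|\Delta f\|_{L^2}^{\frac{N\alpha+2b}{4}}\,\|f\|_{L^2}^{\alpha+2-\frac{N\alpha+2b}{4}}
$$
satisfies $\|\Delta Q\|_{L^2}^2 = \frac{N\alpha+2b}{4(\alpha+2)}\int |x|^{-b}|Q|^{\alpha+2}$ (Pohozaev identities for $\Delta^2 Q - Q - |x|^{-b}Q^{\alpha+1}=0$), and $C_{GN}$ can be expressed in terms of $\|\Delta Q\|_{L^2}$ and $\|Q\|_{L^2}$. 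First I would write, for $f$ as in the hypotheses,
$$
\int \Big[|\Delta f|^2 - \tfrac{N\alpha+2b}{4(\alpha+2)}|x|^{-b}|f|^{\alpha+2}\Big]dx \;\geq\; \|\Delta f\|_{L^2}^2\Big(1 - \tfrac{N\alpha+2b}{4(\alpha+2)}C_{GN}\|\Delta f\|_{L^2}^{\frac{N\alpha+2b}{4}-2}\|f\|_{L^2}^{\alpha+2-\frac{N\alpha+2b}{4}}\Big),
$$
and then use Lemma \ref{lem_coerc_1}, namely $\|f\|_{L^2}^{\frac{2-s_c}{s_c}}\|\Delta f\|_{L^2} \leq (1-\delta)\|Q\|_{L^2}^{\frac{2-s_c}{s_c}}\|\Delta Q\|_{L^2}$, to bound the expression in parentheses below by a positive constant $1-(1-\delta)^{\frac{N\alpha+2b}{4}-2} =: \delta'>0$ (here one uses that in the intercritical range $\frac{N\alpha+2b}{4}-2 = s_c \cdot \frac{\alpha}{2} > 0$, so the exponent is genuinely positive and the scaling bookkeeping works out via $s_c = \frac{N}{2}-\frac{4-b}{\alpha}$). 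This yields
$$
\int \Big[|\Delta f|^2 - \tfrac{N\alpha+2b}{4(\alpha+2)}|x|^{-b}|f|^{\alpha+2}\Big]dx \;\geq\; \delta'\,\|\Delta f\|_{L^2}^2.
$$

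To convert the right-hand side into a multiple of $\int|x|^{-b}|f|^{\alpha+2}$, I would again invoke the Gagliardo--Nirenberg inequality together with the a priori bound on $\|\Delta f\|_{L^2}$: since $\int|x|^{-b}|f|^{\alpha+2} \leq C_{GN}\|\Delta f\|_{L^2}^{\frac{N\alpha+2b}{4}}\|f\|_{L^2}^{\alpha+2-\frac{N\alpha+2b}{4}}$ and, by Lemma \ref{lem_coerc_1}, $\|\Delta f\|_{L^2}^{\frac{N\alpha+2b}{4}-2}\|f\|_{L^2}^{\alpha+2-\frac{N\alpha+2b}{4}}$ is bounded by a fixed constant depending only on $Q$, one gets $\int|x|^{-b}|f|^{\alpha+2}\lesssim \|\Delta f\|_{L^2}^2$. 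Combining this with the previous display gives the claimed inequality with $\eta$ depending only on $\delta_0$, $N$, $\alpha$, $b$ (through $\delta$, $C_{GN}$ and the norms of $Q$). A small subtlety: one must handle the trivial case $f=0$ (or $\|\Delta f\|_{L^2}=0$) separately, where the statement holds vacuously.

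\textbf{Main obstacle.} The only delicate point is the scaling bookkeeping: one must check that the exponent $\frac{N\alpha+2b}{4}-2$ is strictly positive in the intercritical regime and that the product of powers of $\|\Delta f\|_{L^2}$ and $\|f\|_{L^2}$ appearing in the Gagliardo--Nirenberg remainder is exactly the scale-invariant quantity controlled by Lemma \ref{lem_coerc_1} (up to the correct power). This is where the identity $s_c = \frac{N}{2} - \frac{4-b}{\alpha}$ and the Pohozaev relations for $Q$ must be used consistently; everything else is a routine consequence of the sharp constant and the energy-trapping bound. Since Lemma \ref{lem_coerc_1} and these coercivity facts are quoted from \cite{SaanouniRadial}, in practice I would simply cite that reference for the computation of $C_{GN}$ and reproduce the short argument above.
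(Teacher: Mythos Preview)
Your proposal is correct and is precisely the standard energy-trapping argument: apply the sharp inhomogeneous Gagliardo--Nirenberg inequality, use Lemma~\ref{lem_coerc_1} together with the Pohozaev identities for $Q$ to bound the parenthetical factor strictly below $1$, and then reapply Gagliardo--Nirenberg to convert $\delta'\|\Delta f\|_{L^2}^2$ into a multiple of $\int|x|^{-b}|f|^{\alpha+2}$. The paper itself does not reproduce this computation at all but simply quotes the result from \cite{SaanouniRadial}; your write-up is exactly the argument one finds there, so there is nothing to compare.
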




\subsection{Virial-Morawetz estimate}

\begin{proposition}\label{virial}

For $N$, $\alpha$ and $b$ as in Theorem \ref{Scattering} , let $u$ be a $H^2(\mathbb{R}^N)$-solution to \eqref{IBNLS} satisfying \eqref{cond1} and \eqref{cond2}. Then, for any $T>0$,
\begin{equation}\label{virial-morawetz-decay}
\frac{1}{T}\int_0^T\int|x|^{-b}|u(x,t)|^{\alpha+2}\,dx\, dt \lesssim \frac{1}{T^\frac{{\min\{2,b\}}}{1+{\min\{2,b\}}}}.     
\end{equation}
\end{proposition}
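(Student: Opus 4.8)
The plan is to run a Morawetz-type (localized virial) argument with a well-chosen weight, exploiting the coercivity of Lemma \ref{lem_coerc_2} to extract a positive lower bound on the time-integrated quantity, and then optimizing a spatial cutoff radius $R$ against the time length $T$ to produce the claimed decay rate $T^{-\min\{2,b\}/(1+\min\{2,b\})}$. Concretely, I would introduce a radial weight $a=a_R(x)$ that behaves like $|x|^2$ on $B(0,R)$, grows like $R|x|$ (or is bounded by $CR^2$) for $|x|\gtrsim R$, and has controlled derivatives up to fourth order, with $|\nabla^j a|\lesssim R^{2-j}$ for $j=1,\dots,4$ outside the bulk region. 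Then I would consider the Morawetz action
$$
M(t)=2\,\Im\int \overline{u}\,\nabla a\cdot\nabla u\,dx,
$$
which is bounded uniformly in $t$ by $\|u\|_{L^\infty_tH^2}$-type quantities and $\sup|\nabla a|\lesssim R$, hence $|M(t)|\lesssim R E$.

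The main computation is differentiating $M(t)$ in time using the equation \eqref{IBNLS}. For the biharmonic flow this produces the fourth-order analogue of the usual virial identity: a leading term proportional to $\int \Delta a\,|\Delta u|^2$ (plus further terms with $\nabla^2 a$, $\nabla^3 a$, $\nabla^4 a$ acting on $u$ and $\Delta u$) together with a nonlinear contribution of the form $c\int (\Delta a)\,|x|^{-b}|u|^{\alpha+2}$ plus a term coming from $\nabla a\cdot\nabla(|x|^{-b})|u|^{\alpha+2}$. On the bulk region $|x|\le R/2$ where $a=|x|^2$, we have $\Delta a = 2N$ and $\nabla^j a = 0$ for $j\ge 3$, so the bulk contribution is exactly (a positive multiple of)
$$
\int_{|x|\le R/2}\Big[|\Delta u|^2 - \tfrac{N\alpha+2b}{4(\alpha+2)}|x|^{-b}|u|^{\alpha+2}\Big]\,dx,
$$
up to the missing tail, which by Lemma \ref{lem_coerc_2} dominates $\eta\int_{|x|\le R/2}|x|^{-b}|u|^{\alpha+2}$; the extra $-\nabla a\cdot\nabla(|x|^{-b})$ term is in fact favorably signed here (it contributes a positive multiple of $b\int |x|^{-b}|u|^{\alpha+2}$ since $\nabla a\cdot x>0$ and $\nabla(|x|^{-b})$ points inward). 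The terms supported on $R/2\le |x|\le R$ and $|x|\ge R$ are error terms: using $|\nabla^j a|\lesssim R^{2-j}$ and $H^2$ boundedness, the Laplacian-type errors are $O(1)$ in the annulus (bounded by $E^2$), while on $|x|\ge R$ the nonlinear error is controlled by $\||x|^{-b}\|_{L^\infty(|x|\ge R)}\|u\|^{\alpha+2}_{L^{\alpha+2}}\lesssim R^{-b}E^{\alpha+2}$, and interpolation/Sobolev gives a quantitative smallness in the external region. The upshot is
$$
\eta\int |x|^{-b}|u|^{\alpha+2}\,dx \lesssim \frac{d}{dt}M(t) + \text{(error on the annulus and exterior)},
$$
where the annulus error must be made small — this is the point where I would either localize the coercivity more carefully (using Lemma \ref{lem_coerc_1}, so the virial quantity has a fixed positive gap $\delta$ and the small exterior mass/energy cannot destroy it) or absorb the annular terms by choosing $R$ large, getting something like $\frac{d}{dt}M \gtrsim \eta\int|x|^{-b}|u|^{\alpha+2} - C(R^{-b}+R^{-\theta})$.

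Integrating over $[0,T]$ and using $|M(T)-M(0)|\lesssim RE$ yields
$$
\frac{1}{T}\int_0^T\!\!\int |x|^{-b}|u|^{\alpha+2}\,dx\,dt \lesssim \frac{R}{T} + \frac{1}{R^{\min\{2,b\}}},
$$
where the $\min\{2,b\}$ reflects that the weight-derivative errors on the annulus decay like $R^{-2}$ (from $\nabla^3 a,\nabla^4 a$ scaling, after using $H^2$ control) while the inhomogeneity tail decays like $R^{-b}$, and the worse of the two governs the rate. Optimizing in $R$ — choosing $R\sim T^{1/(1+\min\{2,b\})}$ — balances $R/T$ against $R^{-\min\{2,b\}}$ and gives exactly $T^{-\min\{2,b\}/(1+\min\{2,b\})}$, as claimed. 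The main obstacle I anticipate is the bookkeeping of the fourth-order virial identity: unlike the Schrödinger case there are several third- and fourth-derivative-of-weight terms (e.g. terms like $\int \nabla^4 a \cdot(\ldots)|u|^2$ and cross terms pairing $\Delta u$ with $\nabla^3 a\cdot\nabla u$) that must all be shown to be lower order in $R$ using only $\sup_t\|u\|_{H^2}\le E$, and one must verify these do not require $\dot H^2$-control with an unfavorable power of $R$; carefully choosing $a$ so that $\nabla^3 a,\nabla^4 a$ are supported only on the annulus $R/2\le|x|\le R$ and bounded by $R^{-1},R^{-2}$ respectively is what keeps all of these terms $\lesssim R^{-2}\cdot(\text{annular }H^2\text{ mass})$, hence harmless after the $R$-optimization.
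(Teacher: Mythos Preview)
Your proposal is essentially the paper's own argument: a truncated virial with weight $a_R$ equal to $|x|^2$ on $|x|\le R/2$ and $\sim R|x|$ outside, the biharmonic virial identity, bulk coercivity via Lemma~\ref{lem_coerc_2}, error terms of sizes $R^{-2}$ and $R^{-b}$, integration in $t$ giving $\lesssim R/T + R^{-\min\{2,b\}}$, and the optimization $R=T^{1/(1+\min\{2,b\})}$.

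One point, however, is not handled correctly in your sketch and would cause the argument to fail as written. You call the second-derivative kinetic term $-4\sum_{i,j,k}\int_{|x|>R/2}\partial_{jk}a_R\,\partial_{ik}\bar u\,\partial_{ij}u$ an ``$O(1)$ Laplacian-type error'' and propose to absorb it. But since $|\partial_{jk}a_R|\lesssim 1$ (not $R^{-2}$), after integrating in time this term contributes $O(T)$, which destroys the optimization. The paper does \emph{not} treat it as an error: it imposes $\partial_r a\ge 0$ and $\partial_r^2 a\ge 0$ globally, rewrites this term on $|x|>R/2$ as $-2\int\partial_r^2 a_R\,|\partial_r\nabla u|^2-2\sum_i\int\frac{\partial_r a_R}{|x|}|\slashed\nabla\partial_i u|^2\le 0$, and simply drops it by sign. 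Your weight choice (convex, $\sim R|x|$ at infinity) is exactly what makes this work, but you must invoke the sign rather than smallness. A secondary remark: Lemma~\ref{lem_coerc_2} is global, so to get the local bound on $|x|\le R/2$ the paper applies it to $\chi_R u$ for a smooth cutoff $\chi_R$, using the commutator estimate $\Delta(\chi_R u)=\chi_R\Delta u+O(R^{-1})$ to verify that $\chi_R u$ still sits strictly below the mass--energy threshold; this produces an additional $O(R^{-2})$ error, consistent with your bookkeeping.
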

\begin{proof}

Let $R \gg 1$   to be determined below. We take $a$ to be a smooth radial function satisfying
$$
a(x) = \begin{cases}|x|^2, & |x|\leq \frac{1}{2}, \\
|x|,& |x| > 1. \\
\end{cases}
$$
In the intermediate region $\frac{1}{2} < |x| \leq 1$, we impose that
$$
\partial_ra \geq 0, \,\,\, \partial_r^2a \geq 0.
$$
Here, $\partial_r$ denotes the radial derivative, i.e., $\partial_r a = \nabla a \cdot \frac{x}{|x|}$. Note that for $|x| \leq \frac{1}{2}$, we have
$$
a_{ij} = 2 \delta_{ij}, \,\,\, \Delta a = 2N, \,\,\,\text{and } \partial^{\beta}a = 0 \text{ for } |\beta|\geq3.
$$

Finally, define

\begin{equation}
    a_R(x) = R^2 \,a(x/R).
\end{equation}

Consider now the Virial/Morawez quantity
\begin{equation}
    Z(t) := \Im\int  \nabla a_R \cdot \nabla u \,\bar{u}\, dx. 
\end{equation}
By Cauchy-Schwarz, one has $\displaystyle\sup_t |Z(t)| \lesssim R$. We now make use of the virial identity (see \cite{Guo} and \cite{Pausader07}):

\begin{align}\label{main_vir}
    Z'(t) &=  
    - 4\sum_{i,j,k}\int\partial_{jk}a_R \partial_{ik} \bar{u} \partial_{ij} u \, dx
    + \int \left(\frac{\alpha}{\alpha+2}\Delta a_R+\frac{2b }{\alpha+2}\frac{x \cdot \nabla a_R}{|x|^2} \right)|x|^{-b}|u|^{\alpha+2} \, dx\\\label{err_vir}& \quad 
    + 2\sum_{j,k} \int  \partial_{jk}\Delta a_R \partial_j \bar{u} \partial_k u \, dx
    - \frac{1}{2}\int\Delta^3 a_R |u|^2 \, dx
    + \int\Delta^2 a_R |\nabla u|^2 \, dx.
\end{align}

By Cauchy-Schwarz and the definition of $a_R$ (together with the classical chain rule), one has
\begin{equation}\label{err_vir2}
    |\eqref{err_vir}| \lesssim \frac{1}{R^2}.
\end{equation}

For the main term, we compute
\begin{align}
    \eqref{main_vir} 
    &= -4\int_{|x|\leq \frac{R}{2}}  \left[|\Delta u|^2  
    -  \frac{N\alpha+2b}{4(\alpha+2)}|x|^{-b}|u|^{\alpha+2} \right]\, dx
    \\
    &\quad -2\int_{ |x| > \frac{R}{2}}\partial_r^2a|\nabla \partial_r u|^2
    -2\sum_i\int_{ |x| > \frac{R}{2}}\frac{\partial_r a}{|x|}|\slashed{\nabla} \partial_i u|^2+O(\int_{|x| > \frac{R}{2}} |x|^{-b}|u|^{\alpha+2})\\
    &\leq -4\int_{|x|\leq \frac{R}{2}}  \left[|\Delta u|^2  
    -  \frac{N\alpha+2b}{4(\alpha+2)}|x|^{-b}|u|^{\alpha+2} \right]\, dx+O(\frac{1}{R^b}),
\end{align}
where the angular derivative is defined as $\slashed{\nabla} u = \nabla u - \frac{x\cdot \nabla u}{|x|^2}x$. The terms $\slashed{\nabla} \partial_i u$ are not necessarily zero, since we are not assuming radiality, but the corresponding integrals can be discarded for having a non-negative sign. Therefore,
\begin{equation}
    -Z'(t) \geq\int_{|x|\leq \frac{R}{2}}  \left[|\Delta u|^2  
    -  \frac{N\alpha+2b}{4(\alpha+2)}|x|^{-b}|u|^{\alpha+2} \right]\, dx+ O(\frac{1}{R^{\min\{2,b\}}}),
\end{equation}

which, by integration on time, gives
\begin{equation}\label{local_morawetz}
    \int_0^T\int_{|x|\leq \frac{R}{2}}  \left[|\Delta u|^2  
    -  \frac{N\alpha+2b}{4(\alpha+2)}|x|^{-b}|u|^{\alpha+2} \right]\, dx dt \lesssim R +\frac{T}{R^{\min\{2,b\}}},
\end{equation}

We now show that there exists $\eta > 0$ such that

\begin{equation}\label{local_coerc_virial}
 \int_{|x|\leq \frac{R}{2}}  \left[|\Delta u|^2  
    -  \frac{N\alpha+2b}{4(\alpha+2)}|x|^{-b}|u|^{\alpha+2} \right]\, dx \geq \eta \int_{|x|\leq \frac{R}{2}} |x|^{-b}|u|^{\alpha+2}\, dx + O(\frac{1}{R^2}).
\end{equation}
Indeed, if $\phi^A$ is a smooth cutoff to the set $\{|x|\leq \frac{1}{2}\}$ that vanishes outside $\{|x|\leq \frac{1}{2}+\frac{1}{A}\}$, define $ \chi_R^A(x) := \phi^A(\frac{|x|}{R})$. We then have
\begin{equation}\label{commut}
    \Delta(\chi_R^A u) = \chi_R^A \Delta u + 2 \nabla \chi_R^A \cdot \nabla u + u \Delta \chi_R^A
    =\chi_R^A \Delta u+ O(\frac{1}{R}),
\end{equation}
so that, if $\delta_0 > 0$ is such that $M(u_0)^{\frac{2-s_c}{s_c}}E(u_0) \leq (1-\delta_0) M(Q)^{\frac{2-s_c}{s_c}}E(Q)$, then
\begin{equation}
    M(\chi_R^A u)^{\frac{2-s_c}{s_c}}E(\chi_R^A u) \leq M(u_0)^{\frac{2-s_c}{s_c}}E(u_0) + \frac{C}{R } \leq (1-\frac{\delta_0}{2})M(Q)^{\frac{2-s_c}{s_c}}E(Q)
\end{equation}

\begin{equation}
    \|\chi_R^A u\|_{L^2}^{\frac{2-s_c}{s_c}}\|\Delta (\chi_R^A u)\|_{L^2} \leq \| u\|_{L^2}^{\frac{2-s_c}{s_c}}\|\Delta u\|_{L^2} + \frac{C}{R} \leq \| Q\|_{L^2}^{\frac{2-s_c}{s_c}}\|\Delta Q\|_{L^2}, 
\end{equation}
if $R>0$ is large enough (uniformly on time).
Therefore, by Lemma \ref{lem_coerc_2},
\begin{equation}
    \int \left[|\Delta(\chi_R^A u)|^2  
    - \frac{N\alpha+2b}{4(\alpha+2)}|\chi_R^A u|^{\alpha+2} \right]\, dx \geq \eta \int|\chi_R^A u|^{\alpha+2} \, dx.
\end{equation}

Now, 
by \eqref{commut} and by letting $A \to +\infty$, \eqref{local_coerc_virial} is proved. Combining  \eqref{err_vir2}, \eqref{local_morawetz} and \eqref{local_coerc_virial}, we get
\begin{equation}
    \int_0^T\int|x|^{-b}|u|^{\alpha+2} \, dx dt \lesssim R +\frac{T}{R^{\min\{2,b\}}}. 
\end{equation}
By choosing $R = T^\frac{1}{1+{\min\{2,b\}}}$, we finally get
\begin{equation}
    \frac{1}{T}\int_0^T\int|x|^{-b}|u|^{\alpha+2} \, dx dt \lesssim \frac{1}{T^\frac{{\min\{2,b\}}}{1+{\min\{2,b\}}}}. 
\end{equation}
\end{proof}

\begin{proof}[\bf Proof of Theorem \ref{Scattering}]

By Proposition \ref{virial}, there exists a sequence of times $\{t_n\}$ such that $t_n \to +\infty$ and

\begin{equation}
\int |x|^{-b} |u(t_n)|^{2\alpha+2}    \to 0, \text{ as } n \to \infty.
\end{equation}

Now, by choosing $R$ as in Proposition \ref{scattering_criterion} and using H\"older's inequality:
\begin{equation}
\int_{|x|\leq R} |u(t_n)|^2 \lesssim R^\frac{2b+N\alpha}{\alpha+2}\left(\int_{|x|\leq R}|x|^{-b}|u(t_n)|^{\alpha+2}\right)^\frac{2}{\alpha+2} \to 0, \text{ as } n \to +\infty.
\end{equation}
Therefore $u$ scatters forward in time in $H^2(\mathbb{R}^N)$.
\end{proof}

\bibliographystyle{abbrv}

\end{document}